\newcommand{\bydef}{\stackrel{\rm def}{=}}
\newcommand{\cB}{{\mathcal B}}
\newcommand{\cD}{{\mathcal D}}
\newcommand{\cE}{{\mathcal E}}
\newcommand{\cF}{{\mathcal F}}
\newcommand{\cH}{{\mathcal H}}
\newcommand{\cK}{{\mathcal K}}
\newcommand{\cM}{{\mathcal M}}
\newcommand{\cP}{{\mathcal P}}
\newcommand{\bT}{{\mathbb{T}}}
\newcommand{\la}{\langle}
\newcommand{\ra}{\rangle}
\newcommand{\bD}{{\mathbb D}}
\newtheorem{thm}{Theorem}[section]
\newtheorem{corollary}[thm]{Corollary}
\newtheorem{lemma}[thm]{Lemma}
\theoremstyle{definition}
\newtheorem{definition}[thm]{Definition}
\newtheorem{example}[thm]{Example}
\numberwithin{equation}{section}
\author{Abhay Jindal}
\address{Department of Mathematics\\
	Indian Institute of Science\\
	Bangalore 560012, India}
\email{abjayj@iisc.ac.in}
\author{Poornendu Kumar}
\address{Department of Mathematics, University of Manitoba, Winnipeg, Manitoba, Canada, R3T 2N2}
\email{poornendu.kumar@umanitoba.ca}
\begin{document}
	\thanks{{\em 2020 Mathematics Subject Classification.} 47A13, 47A15, 47A20, 47A25, 93B50, 46L05.\\
		{\em Keywords and phrase}: Beurling-Lax-Halmos Theorem, Isometries, Pentablock, Rational Dilation, Wold Decomposition, }
	\title{Operator theory on the pentablock}
	\maketitle
	\begin{abstract}
		The pentablock, denoted as $\cP,$ is defined as follows:  
		$$\cP= \left\{ (a_{21}, {\rm tr}(A), {\rm det}(A)) :  A = [a_{ij}]_{2 \times 2} \text{ with } \|A\|<1 \right\}.$$
		It originated from the work of Agler--Lykova--Young in connection with a particular case of the $\mu$-synthesis problem. It is a non-convex, polynomially convex, $\mathbb{C}$-convex, star-like  about the origin, and inhomogeneous domain. 
		
		This paper deals with operator theory on the pentablock. We study pentablock unitaries and isometries, providing an algebraic characterization of pentablock isometries. En route, we provide the Wold-type decomposition for pentablock isometries, which consists of three parts: the unitary part, the pure part, and a new component. We define this novel component as the quasi-pentablock unitary and provide a functional model for it. Additionally, a model for a class of pure pentablock isometries has been found, along with some examples.  Furthermore, a representation resembling the Beurling-Lax-Halmos paradigm has been presented for the invariant subspaces of pentablock pure isometries.
	\end{abstract}
	
	\section{Introduction and Motivation }
	Consider a bounded domain $\Omega $ of the $d$-dimensional complex Euclidean space $\mathbb{C}^d$ and a tuple $\underline{T}= (T_1, \dots T_d)$ of commuting bounded operators on a complex separable Hilbert space $\cH$ such that
	\begin{enumerate}
		\item the Taylor joint spectrum $\sigma(\underline{T})$ is contained in  $\overline{\Omega}$ (see \cite{Curto, Vas} for a discussion on the Taylor joint spectrum), and
		\item  \begin{equation}\label{VN}
			\left\|r(T_1, \dots, T_d) \right\|\leq{\|r\|_{\infty, \overline{\Omega}}}
		\end{equation} 
		for any rational function $r$ with poles outside $\overline{\Omega}$ where $\|.\|_{\infty, \overline{\Omega}}$ represents the sup norm on $\overline{\Omega}$. 
	\end{enumerate}
	Such an $\Omega$ is called a {\em spectral set} for $\underline{T}.$ If the inequality in \eqref{VN} holds for all matrix-valued rational functions as well, then $\Omega$ is called a {\em complete spectral set} for $\underline{T}$. 
	
	A $\partial\Omega$-normal dilation of the tuple $\underline{T}$ is a commuting tuple of normal operators $\underline{N} = (N_1, \dots, N_d)$ defined on a Hilbert space $\mathcal{K}$ which contains $\mathcal{H}$ as a closed subspace, with the following conditions:
	\begin{enumerate}
		\item $\sigma(\underline{N})$ is a subset of the distinguished boundary $b\Omega$ of $\Omega$, and 
		\item for every rational function $r$ with poles outside $\overline{\Omega}$ we have $$r(T_1, \dots, T_d) = P_{\cH}r(N_1, \dots, N_d)|_{\cH},$$ 
		where $P_{\cH}$ is the orthogonal projection  from $\cK$ onto $\cH$. 
	\end{enumerate} 
	A remarkable theorem of Arveson states that $\Omega$ is a complete spectral set for $\underline{T}$ if and only if $\underline{T}$ has a $\partial \Omega-$normal dilation; see \cite{Arveson}. 
	
	A tuple $\underline{T}$ is called a \textit{$\Omega-$contraction} if $\Omega$ is a spectral set for $\underline{T}.$ A commuting tuple of normal operators $\underline{N} = (N_{1},\dots, N_{d})$ is called a \textit{$\Omega-$unitary} if $\sigma(\underline{N})$ is a subset of the distinguished boundary $b\Omega$. An \textit{$\Omega-$isometry} is the restriction of a $\Omega-$unitary to a joint invariant subspace. We say that rational dilation holds for $\Omega$ if $\Omega$ is a complete spectral set for all $\Omega-$contractions. Else, rational dilation fails.
	
	Historically, operator theorists have been intrigued by whether rational dilation holds for subsets of the complex plane or higher-dimensional Euclidean spaces. Rational dilation is known to hold for simply connected planar domains \cite{von} and for doubly connected planar domains \cite{Agler-H}. However, if a planar domain has multiple holes, then rational dilation fails \cite{DM}. Moving on to multivariable domains, the unit polydisc $\mathbb{D}^d$ has been extensively studied. Rational dilation is established for $d=2$ by Ando \cite{Ando} (also see \cite{Ball-Sau3} for two new constructive proofs of Ando's Theorem along with other model-theoretic results). Still, it does not hold for $d>2$; see \cite{Par}.  A special type of algebraic varieties, known as distinguished varieties (see \cite{AM, BKS}), has been extensively studied from the perspective of operator theory. However, rational dilation does not hold for these varieties, as shown in \cite{DU, DJM}.
	
	Recently, there has been a surge of interest in studying the rational dilation problem within various multivariable domains. These domains were initially discovered due to their relevance to the $\mu$-synthesis problem in Robust Control Theory \cite{Doyle, Dull}. We wish to underscore particular importance of three specific domains: the symmetrized bidisc $\mathbb{G}$ (introduced in \cite{AY} by Agler--Young), the tetrablock $\mathbb{E}$ (introduced in \cite{AYW} by Abouhajar--Young--White), and the pentablock $\mathcal{P}$ (introduced in \cite{ALY-JMAA} by Agler--Lykova--Young).
	These domains exhibit several commonalities, including non-convexity and polynomial convexity, and are categorized as inhomogeneous domains. Additionally, it is worth noting that the Lempert Theorem holds on the symmetrized bidisc \cite{Lem-Co}, and the tetrablock \cite{Lem-Tetra}. However, its validity on the pentablock remains unknown. Approximation by specific types of rational inner functions is another important property that the symmetrized bidisc and the tetrablock enjoy; see \cite{BLMS}. It is not yet known for the pentablock. In terms of dilation, 
	Agler and Young \cite{AY} proved that the rational dilation holds for $\mathbb{G}$ (see also \cite{TPS-Adv}). The rational dilation problem for the tetrablock poses certain uncertainties. However, some progress has been made in this regard; see \cite{Ball-Sau, Ball-Sau2, Tirtha-Tetrablock, BB, Pal}. No domain in $\mathbb{C}^3$ is known where the rational dilation holds.
	
	In this article, the domain of our interest is the pentablock. The pentablock is nothing but the image of the norm unit ball of $2\times 2$ matrices under the following map  $$\pi: A = [a_{ij}]\mapsto (a_{21}, \operatorname{tr}(A), \operatorname{det}(A)),$$ see \cite{ALY-JMAA}. We denote the closure of $\mathcal P$ by $\overline{\mathcal P}$. The set $\cP \subset \mathbb{C}^{3}$ is non-convex, polynomially convex, and star-like about the origin, see \cite{ALY-JMAA}. The pentablock is an inhomogeneous domain; see \cite{Kos-Penta}. The pentablock $\cP$ is closely related to the symmetrized bidisc $\mathbb{G},$ which is defined by 
	$$\mathbb{G} = \{ ({\rm tr}(A), {\rm det}(A)): A = [a_{ij}]_{2 \times 2} \text{ with } \|A\| <1\}.$$ To be more precise, the pentablock can be viewed as a Hartogs domain in $\mathbb{C}^{3}$ over the symmetrized bidisc. The complex geometry and function theory of the pentablock were further developed in \cite{ALY-JMAA, AL-Penta, Su-Penta, PZ}. In this paper, we study the pentablock from the operator theoretical point of view. 
	
	Pentablock unitaries are the candidates for the dilation triple of pentablock contractions. Consequently, a thorough characterization of pentablock unitaries becomes imperative. Section \ref{PU} of this article presents various characterizations of a triple being a pentablock unitary. 
	
	A pentablock isometry $(V_{1},V_{2},V_{3})$ is called pure if $V_{3}$ is a pure isometry. Section \ref{WD} gives a Wold-type decomposition for pentablock isometries. Unlike the earlier decomposition known in different domains, which treated any $\Omega-$isometry as a direct sum of a $\Omega-$unitary and a pure $\Omega-$isometry, this decomposition will incorporate an additional term. We give a functional model for this additional term. Section \ref{P-Iso} gives an algebraic characterization of pentablock isometries. Pure isometries play a significant role in both operator theory and model theory of several domains; see, for instance, \cite{Bhat, NF}. In Section \ref{Pure-Isometry}, we focus our study on pure $\cP$-isometries. We provide a model for a certain class of pure $\cP-$isometries. We also present some examples of pure $\cP$-isometry.
	
	The Beurling–Lax–Halmos theorem connects operator theory with function theory on $\mathbb{D}$. It states that a non-zero closed subspace $M$ of the vector-valued Hardy space $H^2\otimes\mathcal{E}$ is invariant under the shift $M_z$ on $H^2\otimes\mathcal{E}$ if and only if there exist a Hilbert space $\mathcal{E}'$ and an isometric multiplier $\Theta$ such that $M$ is equal to $\Theta (H^2\otimes\mathcal{E}')$, see \cite{NF} as well as recent work on this topic in \cite{Curto2}. This theorem is known for various other cases, for example, complete Nevanlinna–Pick spaces \cite{MT}, space with a complete Nevanlinna–Pick factor \cite{CHS}, in the symmetrized bidisc \cite{Jaydeb}, in the tetrablock \cite{Sau} and in the symmetrized polydisc \cite{sub}. Section \ref{BLH-Rep} provides a Beurling-Lax-Halmos type theorem characterizing joint invariant subspaces of pure $\cP-$isometries.

	A paper  \cite{Pal-Penta} was announced on arXiv very recently. It has some overlap with this paper. In particular, Theorem \ref{P-unitary} and Theorem \ref{P-isometry} also appear there. However, the proof of Theorem \ref{P-isometry} in this paper is significantly different.

	\section{Preliminaries}
	This section provides an overview of the pentablock as well as pentablock-contractions. One of the main results of \cite{ALY-JMAA} contains several characterizations of a point to be in $\overline{\cP}$. In what follows, $\Gamma$ is the closure of the symmetrized bidisc $\mathbb{G}$. 
	
	\begin{thm}[Agler--Lykova--Young]
		Let 
		$$(s,p) = (\beta+\overline{\beta}p, p) = (\lambda_{1} + \lambda_{2}, \lambda_{1} \lambda_{2}) \in \Gamma$$ 
		where $|\beta|\leq 1$ and if $|p|=1$, then $\beta=\frac{s}{2}$. Let $a\in\mathbb{C}$. Then, the following statements are equivalent.
		\begin{enumerate}
			\item $(a,s,p) \in \overline{\cP},$ 
			\item $|a| \leq \bigg| 1 - \frac{\frac{1}{2} s \overline{\beta} }{ 1 + \sqrt{ 1 - | \beta |^{2}}} \bigg|,$
			\item $|a| \leq \frac{1}{2} | 1 - \overline{\lambda_{2}} \lambda_{1} | + \frac{1}{2} (1 - | \lambda_{1}|^{2})^{\frac{1}{2}} \frac{1}{2} (1 - | \lambda_{2}|^{2})^{\frac{1}{2}},$ and
			\item $\sup\limits_{z \in \mathbb{D}} | \Psi_{z}(a,s,p)| \leq 1, $ where $\Psi_{z}$ is the linear fractional map 
			$$\Psi_{z}(a,s,p) = \frac{a (1-|z|^{2})}{1 - sz + pz^{2}}.$$
		\end{enumerate} 
	\end{thm}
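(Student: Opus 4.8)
The plan is to establish the cycle of implications $(1)\Rightarrow(4)\Rightarrow(2)\Rightarrow(3)\Rightarrow(1)$, exploiting the fact that $\cP$ is the image of the matrix ball under $\pi$. First I would note that, by definition, $(a,s,p)\in\overline{\cP}$ if and only if there is a $2\times 2$ contraction $A=[a_{ij}]$ with $a_{21}=a$, $\operatorname{tr}(A)=s$, $\operatorname{det}(A)=p$. For $(1)\Rightarrow(4)$, fix such an $A$ and $z\in\mathbb{D}$; since $\|A\|\le 1$ one has $\|zA\|<1$, so $I-zA$ is invertible and $(I-zA)^{-1}$ is a bounded holomorphic function of $z$. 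A direct computation with the $2\times 2$ cofactor formula gives
\begin{equation*}
\bigl[(I-zA)^{-1}\bigr]_{21} \;=\; \frac{z\,a_{21}}{\det(I-zA)} \;=\; \frac{z\,a}{1 - sz + pz^{2}},
\end{equation*}
using $\det(I-zA)=1-z\operatorname{tr}(A)+z^{2}\det(A)$. Applying the Schwarz--Pick-type estimate for the $(2,1)$ entry of a contraction-valued function (equivalently, bounding $\bigl\|(I-zA)^{-1}(1-|z|^{2})^{1/2}\cdots\bigr\|$ via the standard disc-algebra functional calculus, or more simply estimating $\Psi_z$ directly by $|z|\cdot\|(I-zA)^{-1}\|\le |z|/(1-|z|)$ combined with the identity above rewritten as $\Psi_z(a,s,p)=(1-|z|^2)\,[(I-zA)^{-1}]_{21}/z$) shows $|\Psi_z(a,s,p)|\le 1$; one must be slightly careful at $z=0$, where $\Psi_0(a,s,p)=a=[(I-0\cdot A)^{-1}A]_{21}$ has modulus at most $\|A\|\le 1$. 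So $(1)\Rightarrow(4)$.

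For $(4)\Rightarrow(2)$ I would parametrize $(s,p)$ as in the hypothesis, $(s,p)=(\beta+\bar\beta p,p)$ with $|\beta|\le 1$, and exploit that $1-sz+pz^{2}$ factors in terms of $\beta$; the function $z\mapsto \Psi_z(a,s,p)$ is holomorphic on $\mathbb{D}$, and the supremum condition says it maps $\mathbb{D}$ into $\overline{\mathbb{D}}$, so by the maximum principle it suffices to evaluate (or take a limit) at a well-chosen boundary point. The natural choice, coming from the Blaschke-type structure of the symmetrized bidisc, is the point $z_{0}$ on $\bT$ for which $1-sz_{0}+pz_{0}^{2}$ achieves the relevant extremal value; plugging this in and simplifying with $|p|\le 1$, $s=\beta+\bar\beta p$ collapses the bound to
\begin{equation*}
|a| \;\le\; \Bigl| 1 - \frac{\tfrac12 s\bar\beta}{1+\sqrt{1-|\beta|^{2}}}\Bigr|,
\end{equation*}
which is $(2)$. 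The equivalence $(2)\Leftrightarrow(3)$ is then a purely algebraic identity: writing $\lambda_1,\lambda_2$ for the roots of $z^2-sz+p$, one has $\beta=\tfrac12\bigl(\lambda_1+\lambda_2\bigr)$ up to the standard normalization (or rather $\beta$ is determined by $s=\beta+\bar\beta p$), and a computation shows
\begin{equation*}
\Bigl| 1 - \frac{\tfrac12 s\bar\beta}{1+\sqrt{1-|\beta|^{2}}}\Bigr| \;=\; \tfrac12|1-\bar\lambda_2\lambda_1| + \tfrac14(1-|\lambda_1|^2)^{1/2}(1-|\lambda_2|^2)^{1/2};
\end{equation*}
this is where I would spend the most care, since it requires relating $\sqrt{1-|\beta|^2}$ to $(1-|\lambda_1|^2)^{1/2}(1-|\lambda_2|^2)^{1/2}$ via the identity $1-|\beta|^2=\tfrac14(1-|\lambda_1|^2)(1-|\lambda_2|^2)/|\text{something}|$ valid on $\Gamma$, and the signs/branch of the square roots must be tracked.

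Finally, for $(3)\Rightarrow(1)$ I would reverse-engineer the contraction: given $\lambda_1,\lambda_2$ and $a$ satisfying $(3)$, I want to build $A=[a_{ij}]$ with $\|A\|\le 1$, $a_{21}=a$, $\operatorname{tr}=s=\lambda_1+\lambda_2$, $\det=p=\lambda_1\lambda_2$. The $2\times 2$ case is explicit enough that one can take $A$ upper-triangularizable with diagonal $\{\lambda_1,\lambda_2\}$ (possibly after a unitary conjugation, which preserves $\operatorname{tr}$, $\operatorname{det}$, and the norm but \emph{not} the $(2,1)$ entry — so instead I would directly posit $A=\left[\begin{smallmatrix}\alpha & \beta\\ a & \delta\end{smallmatrix}\right]$ with $\alpha+\delta=s$, $\alpha\delta-a\beta=p$ as two constraints on the four entries) and then optimize the free parameters so that $\|A\|\le 1$; the condition for a $2\times2$ matrix to be a contraction is the explicit inequality $\|A\|^2=\tfrac12\bigl(\|A\|_{HS}^2+\sqrt{\|A\|_{HS}^4-4|\det A|^2}\bigr)\le 1$, and feeding in $(3)$ with the optimal choice of $\beta$ (namely $\beta$ chosen to make $\|A\|_{HS}$ as small as possible given the determinant constraint) yields exactly the bound. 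I expect this last implication — constructing the extremal contraction and verifying the norm bound against $(3)$ — to be the main obstacle, as it is the only direction that is genuinely a synthesis rather than an estimate, though it is softened considerably by the availability of closed-form $2\times 2$ norm formulas.
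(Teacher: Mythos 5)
First, a point of order: the paper does not prove this theorem. It is quoted as a known result of Agler--Lykova--Young and attributed to \cite{ALY-JMAA}, so there is no in-paper argument to compare yours against; the remarks below compare your outline with the original proof in \cite{ALY-JMAA}.

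Your cofactor identity is correct ($[(I-zA)^{-1}]_{21}=za_{21}/\det(I-zA)$, equivalently $\Psi_z(\pi(A))=(1-|z|^2)\,[(I-zA)^{-1}A]_{21}$), but the estimate you propose for $(1)\Rightarrow(4)$ does not close. From $\|zA\|\le|z|$ one gets only $\|(I-zA)^{-1}A\|\le(1-|z|)^{-1}$, and the entrywise bound $|[M]_{21}|\le\|M\|$ then yields $|\Psi_z|\le(1-|z|^2)/(1-|z|)=1+|z|$, which misses the target by a factor of $2$ near the boundary. The inequality $|\Psi_z|\le 1$ is the real content of this implication; in \cite{ALY-JMAA} it rests on a structural lemma showing that a suitable diagonally weighted M\"obius transform of the contraction $A$ is again a contraction whose $(2,1)$ entry is exactly $\Psi_z(\pi(A))$ --- the factor $1-|z|^2$ must be split as $(1-|z|^2)^{1/2}\cdot(1-|z|^2)^{1/2}$ and absorbed on the two sides of the matrix --- and nothing in your sketch supplies this. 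A second concrete problem is $(4)\Rightarrow(2)$: the plan of ``evaluate at a well-chosen boundary point'' cannot work, because for $(s,p)\in\mathbb{G}$ the polynomial $1-sz+pz^2$ has no zeros on $\overline{\mathbb{D}}$, so $(1-|z|^2)/|1-sz+pz^2|\to 0$ as $|z|\to 1$ and the supremum is attained at an \emph{interior} point of $\mathbb{D}$; identifying that maximizer and showing the supremum equals $\bigl|1-\tfrac12 s\overline{\beta}/(1+\sqrt{1-|\beta|^2})\bigr|^{-1}$ is a genuine and fairly lengthy optimization in \cite{ALY-JMAA}, not a substitution. The remaining steps ($(2)\Leftrightarrow(3)$ and the synthesis $(3)\Rightarrow(1)$) are plausible in outline but are stated as intentions rather than carried out. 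As written, the proposal is a plan in which the two hardest analytic steps are missing and the one quantitative estimate actually offered is insufficient.
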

	
	The distinguished boundary of the pentablock is the Shilov boundary with respect to the uniform algebra $A(\cP),$ collection of all continuous functions on $\overline{\cP}$ which are holomorphic in $\cP.$ We shall denote it by $b\mathcal{P}$. The distinguished boundary of domains plays a distinguished role in studying complex geometry and the operator theory of domains. The distinguished boundary of the symmetrized bidisc is 
	$$ b  \Gamma = \{ (s,p) \in \Gamma : s = \overline{s} p \text{ and } |p|=1 \}.$$ 
	The following theorem gives descriptions of points in $b\cP$. 
	
	\begin{thm}\label{dist}
		For $(a,s,p)\in\mathcal{P},$ the following are equivalent:
		\begin{enumerate}
			\item $(a,s,p)\in b \cP$,
			\item $(s,p)\in b\Gamma$ and $|a|^2= 1-\frac{|s|^2}{4}$, 
			\item There exists a unique unitary matrix $U = \begin{pmatrix}
				u_{11} & u_{12} \\ u_{21} & u_{22}
			\end{pmatrix}$ such that
			$$u_{11} = u_{22} \hspace{5mm} \text{and} \hspace{5mm} (a,s,p) = (u_{21}, \operatorname{tr}(U), \operatorname{det}(U)), \text{ and }$$
			\item $(s, p)\in b\Gamma$ and the set $\{\lambda: (\lambda a, s, p)\in \overline{\cP}\}$ is equal to $\overline{\mathbb{D}}$.
		\end{enumerate}
	\end{thm}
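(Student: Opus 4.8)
The plan is to establish the chain of equivalences $(1)\Leftrightarrow(2)$, $(2)\Leftrightarrow(3)$, and $(2)\Leftrightarrow(4)$, relying throughout on the Agler--Lykova--Young characterization of $\overline{\cP}$ recalled above and on the known description of $b\Gamma$. The starting observation is that $b\cP$, being the Shilov boundary of $A(\cP)$, must project under $(a,s,p)\mapsto(s,p)$ into $\Gamma$, and in fact the peak-point/extreme-point structure forces the image to lie in $b\Gamma$; conversely one checks that for fixed $(s,p)\in b\Gamma$ the "fiber" $\{a:(a,s,p)\in\overline{\cP}\}$ is a closed disk of radius $(1-|s|^2/4)^{1/2}$, whose boundary circle is exactly where the modulus of every function in $A(\cP)$ that depends genuinely on $a$ is maximized. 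This is what gives $(1)\Leftrightarrow(2)$: I would use characterization $(2)$ of the ALY theorem, namely $|a|\le\big|1-\tfrac{\frac12 s\overline\beta}{1+\sqrt{1-|\beta|^2}}\big|$, and specialize to $(s,p)\in b\Gamma$, where $|p|=1$ forces $\beta=s/2$ and hence $|\beta|=|s|/2\le1$; substituting $\beta=s/2$ collapses the right-hand side to $\big|1-\tfrac{|s|^2/4}{1+\sqrt{1-|s|^2/4}}\big| = \tfrac{\sqrt{1-|s|^2/4}\,(1+\sqrt{1-|s|^2/4})}{1+\sqrt{1-|s|^2/4}} = \sqrt{1-|s|^2/4}$, so membership in $\overline{\cP}$ over a point of $b\Gamma$ is precisely $|a|\le\sqrt{1-|s|^2/4}$, and the boundary case $|a|^2=1-|s|^2/4$ is what singles out $b\cP$.

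For $(2)\Leftrightarrow(3)$, the idea is to realize a boundary point concretely by a $2\times2$ unitary. Given $(s,p)\in b\Gamma$ write $s=\lambda_1+\lambda_2$, $p=\lambda_1\lambda_2$ with $|\lambda_1|=|\lambda_2|=1$ (this is exactly the description of $b\Gamma$). I would then look for a unitary $U=\begin{pmatrix}u_{11}&u_{12}\\u_{21}&u_{22}\end{pmatrix}$ with equal diagonal entries, $\operatorname{tr}(U)=s$, $\det(U)=p$, and $u_{21}=a$. Equal diagonal entries plus the trace condition give $u_{11}=u_{22}=s/2$; the determinant condition then reads $s^2/4-u_{12}u_{21}=p$, so $u_{12}u_{21}=s^2/4-p$. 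Unitarity of a $2\times2$ matrix with these diagonal entries forces $|u_{12}|=|u_{21}|$ and $|u_{11}|^2+|u_{12}|^2=1$, i.e. $|u_{21}|^2=1-|s|^2/4$; combined with $|a|^2=1-|s|^2/4$ this is consistent, and one checks directly that the remaining phase of $u_{21}$ (equivalently of $u_{12}$, tied by $u_{12}u_{21}=s^2/4-p$ and $\overline{u_{12}}=-u_{21}$ up to the unimodular determinant) is uniquely determined, giving existence and uniqueness of $U$. The converse direction $(3)\Rightarrow(2)$ is the easy computation: from such a $U$ one reads off $|a|^2=|u_{21}|^2=1-|u_{11}|^2=1-|s|^2/4$ and verifies $(s,p)=(\operatorname{tr}U,\det U)\in b\Gamma$ because the eigenvalues of a unitary are unimodular.

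Finally $(2)\Leftrightarrow(4)$ is nearly immediate once $(1)\Leftrightarrow(2)$ is in hand: for $(s,p)\in b\Gamma$, the fiber computation above shows $\{\lambda:(\lambda a,s,p)\in\overline{\cP}\}=\{\lambda:|\lambda a|\le\sqrt{1-|s|^2/4}\}$, which equals $\overline{\mathbb D}$ exactly when $|a|^2=1-|s|^2/4$ (assuming $a\ne0$; if $a=0$ the set is all of $\mathbb C$, which rules this case out, consistent with $|a|^2=1-|s|^2/4>0$ on $b\cP$ since $|s|<2$ on $\mathbb G$), and conversely if the set equals $\overline{\mathbb D}$ then $|a|=\sqrt{1-|s|^2/4}$. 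I expect the main obstacle to be the uniqueness claim in $(3)$ — pinning down that the phase of the off-diagonal entries is forced, not free — since a $2\times2$ unitary has a priori four real parameters and we are imposing three complex constraints; the resolution is that the constraint $u_{11}=u_{22}$ together with unitarity is more rigid than it looks, and I would argue it by writing $U=\omega\begin{pmatrix}\alpha&-\overline\beta\\\beta&\overline\alpha\end{pmatrix}$ with $|\alpha|^2+|\beta|^2=1$, $|\omega|=1$, translating the conditions into equations for $\alpha,\beta,\omega$, and checking the solution is unique.
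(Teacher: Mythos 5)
The paper itself only proves the equivalence $(2)\Leftrightarrow(4)$, citing \cite{ALY-JMAA} for $(1)\Leftrightarrow(2)$ and \cite{Kumar} for $(2)\Leftrightarrow(3)$. For $(2)\Leftrightarrow(4)$ your argument is exactly the paper's: over a point of $b\Gamma$ the fiber is the disc $|a|\le\sqrt{1-|s|^2/4}$, and one reads off when $\{\lambda:(\lambda a,s,p)\in\overline{\cP}\}=\overline{\mathbb{D}}$. Your computation collapsing the Agler--Lykova--Young bound to $\sqrt{1-|s|^2/4}$ when $\beta=s/2$ is correct, and your construction of the unitary in $(2)\Rightarrow(3)$ is workable: $u_{11}=u_{22}=s/2$ and $u_{21}=a$ are forced, $u_{12}$ is pinned down by the determinant relation $u_{12}u_{21}=s^2/4-p$ when $a\neq 0$ (and by $|u_{12}|=|a|=0$ otherwise), and the orthogonality of the columns does check out using $\overline{s}=s/p$ on $b\Gamma$ --- though you should actually carry out that consistency verification rather than just assert it.

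The genuine gap is in $(1)\Leftrightarrow(2)$. Knowing that the fiber over $(s,p)\in b\Gamma$ is a closed disc of radius $\sqrt{1-|s|^2/4}$ does not by itself identify the Shilov boundary of $A(\cP)$: you must (i) exhibit, for each point satisfying $(2)$, a function in $A(\cP)$ peaking there (or otherwise show that every closed boundary contains it), and (ii) show that every $f\in A(\cP)$ attains its maximum modulus on the set described in $(2)$, so that nothing outside it is needed. Your phrases ``the peak-point/extreme-point structure forces\dots'' and ``whose boundary circle is exactly where the modulus of every function\dots is maximized'' assert precisely these two facts without proving them; this is the content of the cited result in \cite{ALY-JMAA} and cannot be waved through. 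Separately, in $(2)\Leftrightarrow(4)$ your dismissal of the case $a=0$ rests on the false claim that $|s|<2$ on $b\Gamma$: the point $(s,p)=(2,1)$ lies in $b\Gamma$, and $(0,2,1)$ satisfies $(2)$ (it comes from the identity matrix in $(3)$) while the set in $(4)$ is all of $\mathbb{C}$ there. This degenerate case is glossed over in the paper's one-line proof as well, but your attempted repair does not work as stated.
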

	\begin{proof}
		The equivalence of statements $(1)$ and $(2)$  can be found in \cite{ALY-JMAA} and the equivalence of $(2)$ and $(3)$ can be found in \cite{Kumar}. We shall prove the equivalence of $(2)$ and $(4)$. This follows from the obeservation that if $(s,p)\in b \Gamma,$ then $(\lambda a, s, p) \in \overline{\cP}$ if and only if $|\lambda a|^{2} \leq 1 - \frac{|s|^{2}}{4}$. 
	\end{proof}

	Recall that a triple $(T_1, T_2, T_3)$ of commuting bounded operators on a Hilbert space is said to be a pentablock contraction ($\mathcal{P}$-contraction) if the following hold:
	\begin{enumerate}
		\item $$\sigma_{T}(T_1, T_2, T_3)\subset\overline{\mathcal{P}};$$
		\item $$\|r(T_1, T_2, T_3)\|\leq \|r\|_{\infty, \overline{\mathcal{P}}},$$
	\end{enumerate}
	for any rational function $r$ with poles outside $\overline{\mathcal{P}}.$ 
	
	Clearly if $(T_{1},T_{2},T_{3})$ is a $\cP-$contraction, then $(T_{2},T_{3})$ is a $\Gamma-$contraction. Conversely, if $(T_1, T_2	)$ is a $\Gamma$- contraction, then $(0, T_1, T_2)$ is a $\mathcal{P}$-contraction. The polynomial convexity of the pentablock gives the following. 
	\begin{lemma}
		Let $(T_1, T_2, T_3)$ be a triple of commuting bounded operator on a Hilbert space $\cH$. Then, $(T_1, T_2, T_3)$ is a pentablock contraction if and only if
		$$\|f(T_1, T_2, T_3)\|\leq \|f\|_{\infty, \overline{\mathcal{P}}}$$ for any holomorphic polynomial $f$ in three variables.
	\end{lemma}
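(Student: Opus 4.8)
The plan is to treat the two implications separately, the forward one being essentially trivial and the converse resting on the polynomial convexity of $\overline{\cP}$. For the forward implication I would simply note that a holomorphic polynomial $f$ in three variables is a rational function with no poles, hence with poles outside $\overline{\cP}$, so part (2) of the definition of a pentablock contraction already yields $\|f(T_1,T_2,T_3)\|\le\|f\|_{\infty,\overline{\cP}}$.

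For the converse, assume $\|f(T_1,T_2,T_3)\|\le\|f\|_{\infty,\overline{\cP}}$ for every holomorphic polynomial $f$; I need to recover both the spectral condition and the rational von Neumann inequality. First I would establish $\sigma_T(T_1,T_2,T_3)\subset\overline{\cP}$: by the spectral mapping theorem for the Taylor joint spectrum, $\sigma\big(f(T_1,T_2,T_3)\big)=f\big(\sigma_T(T_1,T_2,T_3)\big)$ for every polynomial $f$, so for $z\in\sigma_T(T_1,T_2,T_3)$ the scalar $f(z)$ lies in $\sigma\big(f(T_1,T_2,T_3)\big)$ and therefore $|f(z)|\le\|f(T_1,T_2,T_3)\|\le\|f\|_{\infty,\overline{\cP}}$. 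Thus every such $z$ lies in the polynomially convex hull of the compact set $\overline{\cP}$, which equals $\overline{\cP}$ because $\overline{\cP}$ is polynomially convex (see \cite{ALY-JMAA}); hence $\sigma_T(T_1,T_2,T_3)\subset\overline{\cP}$. In particular, writing a rational function $r$ with poles off $\overline{\cP}$ as $r=f/g$ with $g$ a polynomial non-vanishing on $\overline{\cP}$, the operator $g(T_1,T_2,T_3)$ is invertible since $0\notin g\big(\sigma_T(T_1,T_2,T_3)\big)=\sigma\big(g(T_1,T_2,T_3)\big)$, so $r(T_1,T_2,T_3):=f(T_1,T_2,T_3)\,g(T_1,T_2,T_3)^{-1}$ is well defined.

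Next I would upgrade the polynomial estimate to the rational one. The hypothesis makes $f\mapsto f(T_1,T_2,T_3)$ a unital homomorphism on the polynomials which is contractive for the supremum norm on $\overline{\cP}$, so it extends to a unital bounded algebra homomorphism $\rho\colon P(\overline{\cP})\to\mathcal{B}(\cH)$ with $\|\rho(h)\|\le\|h\|_{\infty,\overline{\cP}}$, where $P(\overline{\cP})$ denotes the uniform closure of the polynomials in $C(\overline{\cP})$. For $r=f/g$ as above, both $r$ and $1/g$ are holomorphic on a neighbourhood of $\overline{\cP}$, so the Oka--Weil theorem --- applicable precisely because $\overline{\cP}$ is polynomially convex --- gives $r,\,1/g\in P(\overline{\cP})$. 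From $g\cdot(1/g)=1$ and $\rho(g)=g(T_1,T_2,T_3)$ I obtain $\rho(1/g)=g(T_1,T_2,T_3)^{-1}$, whence $\rho(r)=\rho(f)\rho(1/g)=f(T_1,T_2,T_3)\,g(T_1,T_2,T_3)^{-1}=r(T_1,T_2,T_3)$, and therefore $\|r(T_1,T_2,T_3)\|=\|\rho(r)\|\le\|r\|_{\infty,\overline{\cP}}$. Together with the spectral inclusion, this shows $(T_1,T_2,T_3)$ is a pentablock contraction.

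The only genuinely analytic inputs are the polynomial convexity of $\overline{\cP}$ and the Oka--Weil approximation theorem it enables; the identification $\rho(r)=r(T_1,T_2,T_3)$ and the spectral inclusion are then routine consequences of the spectral mapping theorem for the Taylor spectrum. I expect the point worth emphasizing to be that polynomial convexity of $\overline{\cP}$ is exactly what drives both halves of the converse --- the spectral inclusion and the passage from rational functions to polynomials --- so that the statement genuinely uses this property of the pentablock.
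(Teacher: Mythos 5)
Your proof is correct and is exactly the argument the paper has in mind: the paper states this lemma without proof, prefacing it only with the remark that ``the polynomial convexity of the pentablock gives the following,'' and your write-up (spectral mapping for the Taylor spectrum to get the spectral inclusion into the polynomial hull, then Oka--Weil to pass from polynomials to rational functions with poles off $\overline{\cP}$) is the standard fleshing-out of precisely that remark.
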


	Note that the restriction of a pentablock contraction on a joint invariant subspace is again a pentablock contraction. Indeed, if $\cM$ is a joint invariant subspace of a pentablock contraction $(T_1, T_2, T_3)$ and $f$ is any polynomial in three variables, then
	\begin{align*}
		\|f(T_1|_{\cM}, T_{2}|_{\cM}, T_{3}|_{\cM})\|&=\|f(T_1, T_2, T_3)|_{\cM}\|\leq \|f(T_1, T_2, T_3)\|\leq \|f\|_{\infty, \overline{\mathcal{P}}}.
	\end{align*}
	Similarly, we can show that the adjoint of a pentablock contraction is also a pentablock contraction. Below, we shall give some more examples of pentablock contractions.
	\begin{enumerate}
		\item Let $T_1$ and $T_2$ be two commuting contractions. Note that $(z, 0, w)\in\cP$ whenever $(z,w)\in\mathbb{D}^2$. Let $\delta:\mathbb{D}^2\rightarrow\cP$ be a map defined as 
		$$(z,w)\mapsto ( z, 0, w),$$ and let $f$ be any polynomial. Then
		$$\|f(T_1, 0, T_2)\|=\|f\circ\delta(T_1, T_2)\|\leq \|f\circ\delta\|_{\infty, \mathbb{D}^2}\leq\|f\|_{\infty, \cP}.$$
		Thus, $(T_1, 0, T_2)$ is a pentablock contraction.
		\item Let $(T_{1},T_{2},T_{3})$ be a commuting triple of normal operators. Suppose $\sigma_{T}(T_{1},T_{2},T_{3}) \subseteq \overline{\cP}$. Then $(T_{1}, T_{2},T_{3})$ is a pentablock contraction.
	\end{enumerate}
	We shall use some spaces of vector-valued and operator-valued functions. Let $\cE$ be a separable Hilbert space. Let $H^2(\cE)$ denote the usual Hardy space
	of analytic $\cE$-valued functions on $\bD$ and $L^2(\cE)$ the Hilbert space of square-integrable $\cE$-valued functions on $\bT$, with their natural inner products. We shall denote the space of bounded
	analytic $\mathcal{B}(\cE)$-valued functions on $\bD$ by $H^\infty(\mathcal{B}(\cE))$ and the space of bounded measurable $\mathcal{B}(\cE)$-valued
	functions on $\bT$,  by $L^\infty(\mathcal{B}(\cE))$, each with the appropriate version of the supremum norm.
	In the following lemmas, we observe some basic facts about pentablock contractions. We want to mention that we shall not use these lemmas anywhere in this note. We are writing it here just for the record.
	\begin{lemma}
		If $(T_1, T_2, T_3)$ is a $\cP$-contraction, then for all $z\in\overline{\mathbb{D}}$ the following hold:
		\begin{enumerate}
			\item $(zT_{2}, z^{2} T_{3})$ is a $\Gamma $-contraction,	
			\item $I - (1 - |z|^{2})^{2} T_{1}^{*}T_{1} + |z|^{2} T_{2}^{*} T_{2} + |z|^{4} T_{3}^{*} T_{3}  - 2 Re ( z T_{2} - z^{2} T_{3} + z |z|^{2} T_{2}^{*} T_{3} )   \geq{0}$.\label{P2}
		\end{enumerate}
	\end{lemma}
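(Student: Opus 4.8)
The plan is to derive both assertions from the Agler--Lykova--Young description of $\overline{\cP}$ recalled above, together with the fact---already noted---that if $(T_1,T_2,T_3)$ is a $\cP$-contraction then $(T_2,T_3)$ is a $\Gamma$-contraction. For (1) I would use a composition trick. Let $m_z$ be the polynomial map $m_z(s,p)=(zs,z^2p)$. Writing the closed symmetrized bidisc as $\Gamma=\{(\lambda_1+\lambda_2,\lambda_1\lambda_2):\lambda_1,\lambda_2\in\overline{\mathbb D}\}$, the identity $m_z(\lambda_1+\lambda_2,\lambda_1\lambda_2)=\big((z\lambda_1)+(z\lambda_2),(z\lambda_1)(z\lambda_2)\big)$ together with $|z\lambda_j|\le 1$ shows $m_z(\Gamma)\subseteq\Gamma$ for every $z\in\overline{\mathbb D}$. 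By the spectral mapping theorem $\sigma_{T}(zT_2,z^2T_3)=m_z\big(\sigma_{T}(T_2,T_3)\big)\subseteq\Gamma$; and if $r$ is a rational function with poles off $\Gamma$, a pole of $r\circ m_z$ at a point $w$ would force $m_z(w)$ to be a pole of $r$, which is impossible for $w\in\Gamma$ since $m_z(\Gamma)\subseteq\Gamma$ while the poles of $r$ avoid $\Gamma$; hence $r\circ m_z$ is rational with poles off $\Gamma$ and
$$\|r(zT_2,z^2T_3)\|=\|(r\circ m_z)(T_2,T_3)\|\le\|r\circ m_z\|_{\infty,\Gamma}\le\|r\|_{\infty,\Gamma}.$$
This proves (1).

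For (2), the substance is that $\|\Psi_z(T_1,T_2,T_3)\|\le 1$ for $z\in\mathbb D$, rewritten as an operator inequality. First I would check that $X_z:=I-zT_2+z^2T_3$ is invertible for $z\in\mathbb D$: on $\Gamma$ the polynomial $1-zs+z^2p$ equals $(1-z\lambda_1)(1-z\lambda_2)$, which is nonzero for $|z|<1$ and $|\lambda_j|\le 1$, so by spectral mapping $0\notin\sigma(X_z)$. The same factorization shows the denominator of $\Psi_z(a,s,p)=a(1-|z|^2)/(1-sz+pz^2)$ does not vanish on $\overline{\cP}$, so $\Psi_z$ is a rational function with poles off $\overline{\cP}$, and part (4) of the Agler--Lykova--Young theorem gives $\|\Psi_z\|_{\infty,\overline{\cP}}\le 1$. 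Inserting $\Psi_z$ into the defining inequality of a $\cP$-contraction and using commutativity to write $\Psi_z(T_1,T_2,T_3)=(1-|z|^2)T_1X_z^{-1}$ yields $\|(1-|z|^2)T_1X_z^{-1}\|\le 1$, equivalently $(1-|z|^2)^2T_1^*T_1\le X_z^*X_z$. Expanding $X_z^*X_z=(I-\bar zT_2^*+\bar z^2T_3^*)(I-zT_2+z^2T_3)$ and collecting the cross terms into real parts gives
$$X_z^*X_z=I+|z|^2T_2^*T_2+|z|^4T_3^*T_3-2\operatorname{Re}\big(zT_2-z^2T_3+z|z|^2T_2^*T_3\big),$$
so $(1-|z|^2)^2T_1^*T_1\le X_z^*X_z$ is exactly the inequality asserted in (2). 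For $|z|=1$ the coefficient $(1-|z|^2)^2$ vanishes and the claim degenerates to $X_z^*X_z\ge 0$, which is automatic; hence (2) holds for all $z\in\overline{\mathbb D}$.

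The only delicate points are bookkeeping: verifying that $\Psi_z$, respectively $r\circ m_z$, is an admissible input for the rational functional calculus associated with $\overline{\cP}$, respectively $\Gamma$---each of which reduces to the elementary non-vanishing of $(1-z\lambda_1)(1-z\lambda_2)$ when $|z|<1$ and $|\lambda_j|\le 1$---and the routine expansion of $X_z^*X_z$. I do not expect any conceptual obstacle.
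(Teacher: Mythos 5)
Your proof is correct and follows essentially the same route as the paper: part (1) via the observation that $(s,p)\mapsto(zs,z^2p)$ maps $\Gamma$ into itself, and part (2) by rewriting $\|\Psi_z(T_1,T_2,T_3)\|\le 1$ as $(1-|z|^2)^2T_1^*T_1\le X_z^*X_z$ and expanding. Your treatment is if anything more careful than the paper's, since you justify the invertibility of $X_z$ for $|z|<1$ and handle the degenerate case $|z|=1$ explicitly, where the paper glosses over both points.
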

	\begin{proof}
		The first part follows from the observation that if $(s, p)\in\Gamma$, then $(zs, z^2p)$ also is in $\Gamma$ for all $z\in\overline{\mathbb{D}}$. Now, we shall prove part $\eqref{P2}$. Since the triple $(T_1, T_2, T_3)$ is a pentablock contraction, it follows that the operators  
		$$\Psi_{z}(T_1,T_2,T_3):= (1-|z|^{2})T_1\left(I-zT_2+z^2T_3\right)^{-1}$$
		are contractions for all $z\in\overline{\mathbb{D}}$. Thus for all $z\in\overline{\mathbb{D}}$, we have 
		\begin{align*}
			0&\leq I-\Psi_{z}(T_1,T_2,T_3)^*\Psi_{z}(T_1,T_2,T_3).
		\end{align*}
		A simple matrix calculation will show that the right hand of the equation above is 
		\begin{align*}
			A^*\left[I - (1 - |z|^{2})^{2} T_{1}^{*}T_{1} + |z|^{2} T_{2}^{*} T_{2} + |z|^{4} T_{3}^{*} T_{3}  - 2 Re ( z T_{2} - z^{2} T_{3} + z |z|^{2} T_{2}^{*} T_{3} )\right]A,
		\end{align*}
		where $A$ is some operator. Therefore the positivity of $$I-\Psi_{z}(T_1,T_2,T_3)^*\Psi_{z}(T_1,T_2,T_3)$$ gives the part \eqref{P2}. This completes the proof.
	\end{proof}
	
	\begin{lemma}
		Let $\varphi_1, \varphi_2,$ and $\varphi_3$  be functions in $H^\infty(\mathcal{B}(\cE)).$ Then the operator triple $\left(M_{\varphi_1}, M_{\varphi_2}, M_{\varphi_3}\right)$ on $H^{2}(\cE)$ is a $\cP$-contraction if and only
		if the operator triple $(\varphi_1(z), \varphi_2(z), \varphi_3(z))$ is a $\cP$-contraction for every $z\in \mathbb{D}$.
	\end{lemma}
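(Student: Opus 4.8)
The plan is to reduce the whole statement to the polynomial von Neumann inequality by invoking the lemma proved just above, which says that a commuting triple of bounded operators is a $\cP$-contraction precisely when $\|f(T_1,T_2,T_3)\| \le \|f\|_{\infty,\overline{\cP}}$ holds for every holomorphic polynomial $f$ in three variables. In particular the Taylor spectrum condition never has to be addressed directly, since polynomial convexity of $\cP$ has already done that work.

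First I would record two elementary facts. The first is the standard identity that for $\Phi \in H^\infty(\mathcal{B}(\cE))$ the multiplication operator $M_\Phi$ is bounded on $H^2(\cE)$ with $\|M_\Phi\| = \sup_{z\in\mathbb{D}}\|\Phi(z)\|$ (see \cite{NF}). The second is that for a holomorphic polynomial $f$ in three variables one has $f(M_{\varphi_1},M_{\varphi_2},M_{\varphi_3}) = M_{f(\varphi_1,\varphi_2,\varphi_3)}$, where $f(\varphi_1,\varphi_2,\varphi_3) \in H^\infty(\mathcal{B}(\cE))$ since it is a finite sum of products of the $\varphi_i$; this uses $M_{\varphi_i}M_{\varphi_j} = M_{\varphi_i\varphi_j}$ and $M_{\varphi_i}+M_{\varphi_j} = M_{\varphi_i+\varphi_j}$. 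I would also note that $M_{\varphi_1},M_{\varphi_2},M_{\varphi_3}$ commute if and only if $\varphi_1(z),\varphi_2(z),\varphi_3(z)$ commute for every $z\in\mathbb{D}$, since equality of two multiplication operators is exactly pointwise equality of their symbols.

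For the forward implication, suppose $(M_{\varphi_1},M_{\varphi_2},M_{\varphi_3})$ is a $\cP$-contraction. Fix $z_0\in\mathbb{D}$; the symbols commute at $z_0$ by the remark above, and for any holomorphic polynomial $f$ the two facts give
\[
\|f(\varphi_1(z_0),\varphi_2(z_0),\varphi_3(z_0))\| \le \sup_{z\in\mathbb{D}}\|f(\varphi_1(z),\varphi_2(z),\varphi_3(z))\| = \|M_{f(\varphi_1,\varphi_2,\varphi_3)}\| = \|f(M_{\varphi_1},M_{\varphi_2},M_{\varphi_3})\| \le \|f\|_{\infty,\overline{\cP}},
\]
so $(\varphi_1(z_0),\varphi_2(z_0),\varphi_3(z_0))$ obeys the polynomial von Neumann inequality and is a $\cP$-contraction by the preceding lemma. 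For the converse, suppose $(\varphi_1(z),\varphi_2(z),\varphi_3(z))$ is a $\cP$-contraction for every $z\in\mathbb{D}$. Then the symbols commute pointwise, so the three multiplication operators commute, and for any holomorphic polynomial $f$ the same two facts together with the pointwise von Neumann inequality yield
\[
\|f(M_{\varphi_1},M_{\varphi_2},M_{\varphi_3})\| = \sup_{z\in\mathbb{D}}\|f(\varphi_1(z),\varphi_2(z),\varphi_3(z))\| \le \|f\|_{\infty,\overline{\cP}},
\]
whence $(M_{\varphi_1},M_{\varphi_2},M_{\varphi_3})$ is a $\cP$-contraction, again by the preceding lemma.

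I do not expect any genuine obstacle: the argument is entirely bookkeeping around the norm formula $\|M_\Phi\| = \sup_{z}\|\Phi(z)\|$ and the polynomial functional calculus. The only point needing a moment's attention is that the criterion for being a $\cP$-contraction is tested on polynomials rather than on arbitrary rational functions with poles off $\overline{\cP}$ — but that is precisely the content of the polynomial-convexity lemma stated just before, so once that is cited the proof closes.
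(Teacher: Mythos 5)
Your proposal is correct and follows essentially the same route as the paper: both reduce the definition of a $\cP$-contraction to the polynomial von Neumann inequality via the polynomial-convexity lemma, and then use the identities $f(M_{\varphi_1},M_{\varphi_2},M_{\varphi_3})=M_{f(\varphi_1,\varphi_2,\varphi_3)}$ and $\|M_{\Phi}\|=\sup_{z\in\mathbb{D}}\|\Phi(z)\|$ to pass between the operator triple and the pointwise triples. Your explicit remark that commutativity of the multiplication operators is equivalent to pointwise commutativity of the symbols is a small piece of bookkeeping the paper leaves implicit, but the argument is otherwise identical.
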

	\begin{proof}
		By definition of the pentablock contraction, the triple $\left(M_{\varphi_1}, M_{\varphi_2}, M_{\varphi_3}\right)$ is a $\cP$-contraction if and only if 
		\begin{align}\label{PC1}
			\|f(M_{\varphi_1}, M_{\varphi_2}, M_{\varphi_3})\|\leq \|f\|_{\infty, \overline{\mathcal{P}}}
		\end{align}
		for any polynomial $f$ on $\mathcal{P}$. Using the fact that $f(M_{\varphi_1}, M_{\varphi_2}, M_{\varphi_3})=M_{f\left(\varphi_1,\varphi_2, \varphi_3\right)}$ and $\|M_{\varphi}\|= \|\varphi\|_{\infty, \mathbb{D}}$ for any $\varphi\in H^\infty(\mathcal{B}(\cE))$, we get that the equation \eqref{PC1} is true if and only if 
		$$\|f({\varphi_1}, {\varphi_2}, {\varphi_3})\|_{\infty, \mathbb{D}}  \bydef {\rm sup}\{  \|f({\varphi_1 (z)}, {\varphi_2(z)}, {\varphi_3(z)})\| : z \in \mathbb{D}\} \leq \|f\|_{\infty, \overline{\mathcal{P}}},$$
		for any polynomial $f$ on $\mathcal{P}$. This completes the proof.
	\end{proof}
	
	\section{Pentablock unitaries}\label{PU}
	The section aims to discuss several characterizations of pentablock unitaries. To commence our discussion, we will revisit the definition of pentablock unitaries.
	\begin{definition}
		A triple $(N_1, N_2, N_3)$ on a Hilbert space $\cH$ is said to be a pentablock unitary ($\mathcal{P}$-unitary) if the following hold:
		\begin{enumerate}
			\item
			$N_1, N_2, N_3$ are commuting normal operators, and
			\item
			$\sigma_{T}(N_1, N_2, N_3)\subset {b\mathcal{P}}$.
		\end{enumerate}
	\end{definition}
	Let us see an example of a pentablock unitary.
	\begin{example}
		Let $\beta\in\mathbb{T}$. Then, the commuting triple of normal operators  $\left(M_{\frac{\beta-\overline{\beta} z}{2}}, M_{\beta+\overline{\beta}z}, M_{z}\right)$ on $L^2(\bT)$ is a pentablock unitary. Indeed, set
		$$\varphi_1(z)= \frac{\beta-\overline{\beta}z}{2}, \quad \varphi_2(z)=\beta+\overline{\beta}z \quad\text{ and } \quad \varphi_3(z)=z.$$
		Note that, for $z\in\mathbb{T}$, we get $\varphi_{2}(z) = \overline{\varphi_{2}(z)} \varphi_{3}(z)$, $|\varphi_3(z)|=1,$  and $|\varphi_{2}(z)| \leq 2.$ Hence,  $(\varphi_{2}(z), \varphi_{3}(z)) \in b\Gamma$ for all $z\in\bT.$ Now, for $z\in\mathbb{T}$,
		\begin{align}\label{PX1}
			|\varphi_1(z)|^2 = \frac{1}{2}-\frac{1}{4}\left[\overline{\beta}^2z+\beta^2\overline{z} \right]
			\quad \text{ and } \quad 
			|\varphi_2(z)|^2 = 2+\beta^2\overline{z}+\overline{\beta}^2z.
		\end{align}
		Thus, from equation \eqref{PX1}, for all $z\in\mathbb{T}$, we have
		$$4|\varphi_1(z)|^2+|\varphi_2(z)|^2=4.$$
		Hence, $\left(\varphi_1(z), \varphi_2(z), \varphi_3(z)\right)\in b\cP$ for all $z\in\bT$. Since $\left(M_{\frac{\beta-\overline{\beta} z}{2}}, M_{\beta+\overline{\beta}z}, M_{z}\right)$ is a triple of commuting normal operators, it follows that the Taylor joint spectrum of the triple	$(M_{\frac{\beta-\overline{\beta} z}{2}}, M_{\beta+\overline{\beta}z}, M_{z})$ is contained in  ${b\mathcal{P}}$. Therefore, $\left(M_{\frac{\beta-\overline{\beta} z}{2}}, M_{\beta+\overline{\beta}z}, M_{z}\right)$ is a $\cP$-unitary.
	\end{example}
	The $\Gamma$-unitaries have been extensively studied, and their various characterizations are known, see \cite{AY, TPS-Adv}. For our discussion, we shall focus exclusively on the following ones.
	\begin{thm}\label{Gamma-unitary}
		Let $(S, P)$ be a pair of commuting operators defined on a Hilbert space. Then the following are equivalent:
		\begin{enumerate}
			\item $(S,P)$ is a $\Gamma-$unitary,
			\item $P$ is a unitary, $S = S^{*}P,$ and $r(S) \leq 2,$ where $r(S)$ is the spectral radius of $S$, and
			\item $P$ is a unitary, and $(S,P)$ is a $\Gamma-$isometry.
		\end{enumerate}
	\end{thm}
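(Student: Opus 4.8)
The plan is to run the cycle $(1)\Rightarrow(2)\Rightarrow(3)\Rightarrow(1)$, with the spectral theorem for commuting normal operators doing the work in the first step and the Fuglede--Putnam theorem together with a short reducing-subspace argument handling the rest. For $(1)\Rightarrow(2)$, if $(S,P)$ is a $\Gamma$-unitary then the spectral theorem supplies a joint spectral measure $E$ on $\sigma_T(S,P)\subseteq b\Gamma$ with $S=\int s\,dE$ and $P=\int p\,dE$. Every $(s,p)\in b\Gamma$ satisfies $|p|=1$, $s=\bar s p$ and $|s|\le 2$, so the functional calculus gives at once that $P$ is unitary, $S=S^{*}P$ and $\|S\|\le 2$; in particular $r(S)\le 2$.

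For $(2)\Rightarrow(3)$ I would establish the ostensibly stronger statement $(2)\Rightarrow(1)$, since a $\Gamma$-unitary is trivially a $\Gamma$-isometry (restrict to the whole space) with $P$ unitary. So assume $P$ is unitary, $S=S^{*}P$ and $r(S)\le 2$. First, $S$ is normal, because $SS^{*}=(S^{*}P)(S^{*}P)^{*}=S^{*}PP^{*}S=S^{*}S$. Since $S$ and $P$ are then commuting normal operators, Fuglede--Putnam puts $S^{*}$ and $P^{*}$ into the commutant, so the unital $C^{*}$-algebra $C^{*}(S,P)$ is commutative; identifying it with $C\big(\sigma_T(S,P)\big)$ via the Gelfand transform (equivalently, the joint spectral theorem), the operator identities $P^{*}P=I$, $S=S^{*}P$ and $\|S\|=r(S)\le 2$ become, pointwise on the joint spectrum, $|p|=1$, $s=\bar s p$ and $|s|\le 2$. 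The elementary identity $\{(s,p):|p|=1,\ s=\bar s p,\ |s|\le 2\}=b\Gamma$ (for such $(s,p)$ write $p=\zeta^{2}$ with $\zeta\in\bT$ and check that $s=\lambda_1+\lambda_2$, $p=\lambda_1\lambda_2$ with $|\lambda_j|=1$) then forces $\sigma_T(S,P)\subseteq b\Gamma$, so $(S,P)$ is a $\Gamma$-unitary.

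For $(3)\Rightarrow(1)$, write $(S,P)=(\tilde S|_{\cH},\tilde P|_{\cH})$ where $(\tilde S,\tilde P)$ is a $\Gamma$-unitary on some $\cK\supseteq\cH$ and $\cH$ is jointly invariant. By the already-proved $(1)\Rightarrow(2)$ we have $\tilde S=\tilde S^{*}\tilde P$ with $\tilde P$ unitary, hence $\tilde S^{*}=\tilde S\tilde P^{*}$. Since $P=\tilde P|_{\cH}$ is assumed unitary it is surjective on $\cH$, so $\tilde P\cH=\cH$; thus $\cH$ reduces $\tilde P$ and also $\tilde P^{*}\cH=\cH$. Consequently $\tilde S^{*}\cH=\tilde S\tilde P^{*}\cH=\tilde S\cH\subseteq\cH$, so $\cH$ reduces $\tilde S$ as well. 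Restricting a commuting pair of normal operators to a reducing subspace again yields commuting normals, with $\sigma_T(S,P)\subseteq\sigma_T(\tilde S,\tilde P)\subseteq b\Gamma$; hence $(S,P)$ is a $\Gamma$-unitary. As a by-product one reads off $S=\big(\tilde S^{*}\tilde P\big)|_{\cH}=S^{*}P$, reconciling with $(2)$.

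The genuinely soft inputs are the spectral theorem and Fuglede--Putnam. The one place demanding care is the reducing-subspace step in $(3)\Rightarrow(1)$: one must verify that invariance of $\cH$ together with the hypothesis that $P$ (not $\tilde P$) is unitary already forces $\cH$ to reduce \emph{both} $\tilde S$ and $\tilde P$ --- and this is exactly where the identity $\tilde S^{*}=\tilde S\tilde P^{*}$, valid because $\tilde P$ is unitary, is essential. I expect that to be the main, if modest, obstacle.
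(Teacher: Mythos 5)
Your argument is correct, but note that the paper does not prove this theorem at all: Theorem \ref{Gamma-unitary} is imported verbatim from the literature (\cite{AY, TPS-Adv}), so there is no in-paper proof to compare against. Your blind proof is essentially the standard argument from those references and it checks out. The implication $(1)\Rightarrow(2)$ via the joint spectral measure on $b\Gamma$ is fine; in $(2)\Rightarrow(1)$ the computation $SS^{*}=(S^{*}P)(S^{*}P)^{*}=S^{*}S$ correctly yields normality of $S$, Fuglede--Putnam then makes $C^{*}(S,P)$ commutative, and your verification that $\{(s,p):|p|=1,\ s=\bar s p,\ |s|\le 2\}=b\Gamma$ (write $p=\zeta^{2}$, note $s\bar\zeta\in[-2,2]$, take $\lambda_{1,2}=\zeta e^{\pm i\theta}$ with $2\cos\theta=s\bar\zeta$) is exactly the point that is usually left implicit; using $\|S\|=r(S)$ for the now-normal $S$ closes that step. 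The reducing-subspace argument in $(3)\Rightarrow(1)$ is also sound: unitarity of $P=\tilde P|_{\cH}$ gives $\tilde P\cH=\cH$, hence $\tilde P^{*}\cH=\cH$, and the Fuglede--Putnam consequence $\tilde S^{*}=\tilde S\tilde P^{*}$ then forces $\tilde S^{*}\cH\subseteq\cH$, so $\cH$ reduces the pair and the Taylor spectrum of the restriction (a direct summand) stays inside $b\Gamma$. The only cosmetic slip is the parenthetical "(restrict to the whole space)" justification that a $\Gamma$-unitary is a $\Gamma$-isometry -- harmless, since the whole space is indeed jointly invariant. In short: correct, self-contained, and consistent with how \cite{AY} and \cite{TPS-Adv} prove it; the paper simply chose to cite rather than reprove.
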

	
	Once we have a description of the distinguished boundary, we can anticipate the characterization of the pentablock unitaries. With Theorem \ref{dist} in hand, we have the following characterizations of the pentablock unitaries. Ideas in this proof are taken from \cite{Tirtha-Tetrablock}.
	\begin{thm}\label{P-unitary}
		Let $(N_1, N_2, N_3)$ be a commuting triple of bounded operators on a Hilbert space $\cH$. Then the following are equivalent:
		\begin{enumerate}
			\item $(N_1, N_2, N_3)$ is a $\mathcal{P}$-unitary;
			\item $N_{1}$ is  normal, $(N_2, N_3)$ is a $\Gamma$-unitary, and $N_{1}^{*} N_{1} = I -\frac{1}{4}N_2^{*}N_2$;
			\item There are commuting normal operators $U_1, U_2$ and $U_3$ such that the $2 \times 2$ block operator matrix $U = \left(
			\begin{array}{cc}
				U_1 & U_2 \\
				U_3 & U_1 \\
			\end{array}
			\right)$
			is a unitary operator and $(N_1,N_2, N_3) = (U_3, 2U_1, U_1^2 - U_2U_3)$. 
		\end{enumerate}
	\end{thm}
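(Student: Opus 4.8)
The plan is to establish the cycle $(1)\Rightarrow(2)\Rightarrow(3)\Rightarrow(1)$, leaning on Theorem~\ref{dist} to translate the spectral condition $\sigma_T(N_1,N_2,N_3)\subset b\cP$ into the pointwise algebraic identities of part~(2) via the spectral theorem for the commuting normal triple.

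For $(1)\Rightarrow(2)$: suppose $(N_1,N_2,N_3)$ is a $\cP$-unitary. Since it is a commuting normal triple, the projection-valued spectral measure $E$ is supported on $\sigma_T(N_1,N_2,N_3)\subset b\cP$, and $N_i=\int \zeta_i\,dE$. By the equivalence of (1) and (2) in Theorem~\ref{dist}, every point $(a,s,p)$ of $b\cP$ satisfies $(s,p)\in b\Gamma$ and $|a|^2=1-\tfrac14|s|^2$. Integrating the first of these against $E$ gives that $(N_2,N_3)$ is a commuting normal pair with joint spectrum in $b\Gamma$, i.e.\ a $\Gamma$-unitary; integrating the scalar identity $|a|^2 = 1-\tfrac14|s|^2$ (equivalently $\overline{a}a + \tfrac14\overline{s}s = 1$) against $E$ yields $N_1^*N_1 = I-\tfrac14 N_2^*N_2$. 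Normality of $N_1$ is immediate. The mild technical point here is checking that the Taylor joint spectrum of the subtriple $(N_2,N_3)$ is the image of $\sigma_T(N_1,N_2,N_3)$ under the coordinate projection — this is standard for commuting normal tuples (the Taylor spectrum coincides with the support of the joint spectral measure, which behaves well under projections).

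For $(2)\Rightarrow(3)$: given that $(N_2,N_3)$ is a $\Gamma$-unitary, apply Theorem~\ref{Gamma-unitary}(2): $N_3$ is unitary and $N_2 = N_2^*N_3$ with $r(N_2)\le 2$. Set $U_1 := \tfrac12 N_2$, so $U_1$ is normal with $U_1=U_1^*N_3$ and $\|U_1\|\le 1$ (the norm bound comes from $\|U_1\|^2 = \|U_1^*U_1\| = \|U_1 U_1^*\|$ and $U_1^*U_1 = \tfrac14 N_2^*N_2 = I-N_1^*N_1\le I$). Then define $U_2 := (I-U_1^*U_1)^{1/2}W$ and $U_3 := N_1$, where $W$ is chosen as a suitable normal unitary-like factor so that the block matrix $U=\sbm{U_1 & U_2\\ U_3 & U_1}$ is unitary; concretely, since $U_3=N_1$ already satisfies $U_3^*U_3 = I-U_1^*U_1$ by part~(2), the natural choice is to build $U_2$ from $U_3$ by the relation forced by unitarity of $U$. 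The conditions $U^*U=UU^*=I$ unwind into four operator identities: $U_1^*U_1+U_3^*U_3=I$, $U_1^*U_2+U_3^*U_1=0$, $U_2^*U_1+U_1^*U_3=0$, and $U_2^*U_2+U_1^*U_1=I$ (and the transposed set from $UU^*=I$). The first is exactly $N_1^*N_1 = I-\tfrac14N_2^*N_2$; one then uses the $\Gamma$-unitary relation $N_2=N_2^*N_3$ together with commutativity and normality to solve for $U_2$ and verify the remaining identities, also checking that $U_1,U_2,U_3$ commute and are normal, and finally reading off $2U_1=N_2$ and $U_1^2-U_2U_3 = \tfrac14N_2^2 - U_2N_1$; one must check this last quantity equals $N_3=\det$-coordinate, which follows from $\det U = U_1^2-U_2U_3$ being unitary and the $\Gamma$-unitary structure linking $\operatorname{tr}$ and $\det$ of $U$ to $(N_2,N_3)$.

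For $(3)\Rightarrow(1)$: if $U=\sbm{U_1&U_2\\U_3&U_1}$ is a unitary $2\times2$ block operator matrix with $U_1,U_2,U_3$ commuting normal operators, then $U$ is a normal operator acting on $\cH\oplus\cH$ whose spectrum lies in $\bT$. Pointwise (via joint diagonalization of the commuting normal family $\{U_1,U_2,U_3\}$, or simultaneous spectral measure), at each joint spectral point we get a genuine $2\times2$ unitary matrix of the form $\sbm{u_1&u_2\\u_3&u_1}$ with equal diagonal entries, and by the equivalence of (1) and (3) in Theorem~\ref{dist} its triple $(u_3,\operatorname{tr},\det) = (u_3,2u_1,u_1^2-u_2u_3)$ lies in $b\cP$. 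Hence $\sigma_T(U_3,2U_1,U_1^2-U_2U_3)\subset b\cP$, and since these three operators are commuting normals (being commuting normal functions of a commuting normal family), the triple is a $\cP$-unitary.

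\textbf{Main obstacle.} I expect the genuinely delicate step to be $(2)\Rightarrow(3)$: producing the off-diagonal block $U_2$ and proving that the assembled block matrix is unitary \emph{and} that $U_1,U_2,U_3$ mutually commute and are normal, while simultaneously matching the third coordinate $U_1^2-U_2U_3$ with $N_3$. The $\Gamma$-unitary relations for $(N_2,N_3)$ are exactly the leverage that makes the defect operator $(I-U_1^*U_1)^{1/2}=(N_1^*N_1)^{1/2}$ factor compatibly with $N_1$; organizing this so that the functional calculus respects all commutation relations is where the care is needed. The converse direction $(3)\Rightarrow(1)$ and the forward direction $(1)\Rightarrow(2)$ are essentially applications of the spectral theorem plus Theorem~\ref{dist}, and should be routine.
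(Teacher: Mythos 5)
Your overall architecture and the directions $(1)\Rightarrow(2)$ and $(3)\Rightarrow(1)$ are sound and essentially coincide with the paper's argument: both reduce the spectral condition $\sigma_T(N_1,N_2,N_3)\subset b\cP$ to the pointwise identities of Theorem \ref{dist} via the commutative $C^*$-algebra generated by the normal triple (the paper phrases this with the Gelfand transform rather than the spectral measure, and routes $(3)\Rightarrow(1)$ through $(2)$, but these are cosmetic differences).

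The genuine gap is exactly where you flagged it: in $(2)\Rightarrow(3)$ you never actually produce $U_2$. Writing $U_2=(I-U_1^*U_1)^{1/2}W$ for ``a suitable normal unitary-like factor $W$'' and saying $U_2$ is ``forced by unitarity of $U$'' is not a construction — the off-diagonal unitarity conditions $U_1^*U_2+U_3^*U_1=0$ do not determine $U_2$ from its modulus unless you exhibit the phase, and without an explicit $U_2$ none of the remaining claims (commutativity and normality of the triple, the identity $U_1^2-U_2U_3=N_3$) can be verified. The paper closes this with the explicit choice
$$U_1=\tfrac12 N_2,\qquad U_2=-N_1^*N_3,\qquad U_3=N_1,$$
after which everything you listed is a short computation: $U_2^*U_2+U_1^*U_1=N_3^*N_1N_1^*N_3+\tfrac14N_2^*N_2=N_1^*N_1+\tfrac14N_2^*N_2=I$ (normality of $N_1$, unitarity of $N_3$, Fuglede); $U_1^*U_2+U_3^*U_1=-\tfrac12N_2^*N_1^*N_3+\tfrac12N_1^*N_2=0$ using the $\Gamma$-unitary relation $N_2^*N_3=N_2$; and
$$U_1^2-U_2U_3=\tfrac14N_2^2+N_1^*N_1N_3=\tfrac14N_2^2+\bigl(I-\tfrac14N_2^*N_2\bigr)N_3=N_3,$$
again by $N_2^*N_3=N_2$. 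So your proof is incomplete as written, but the missing ingredient is a single explicit formula rather than a flaw in the strategy.
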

	\begin{proof} We establish the equivalence of $(1), (2)$ and $(3)$ by showing that
		$$ (3)\implies (2)\implies (1) \implies (2)\implies(3).$$
		
		$\text {Proof of } (3)\Rightarrow (2):$  Set $N_1= U_3, N_2= 2U_1$ and $N_3= U_1^2-U_2U_3$. Since $U_1, U_2$, and $U_3$ are commuting normal operators, so $N_1, N_2$ and $N_3$ are commuting normal operators. First we show that $(N_2, N_3)$ is a $\Gamma$-unitary. Since  $U$ is unitary, so we have 
		$$ \left(
		\begin{array}{cc}
			U_1 & U_2 \\
			U_3 & U_1 \\
		\end{array}
		\right)\left(
		\begin{array}{cc}
			U_1^* & U_3^* \\
			U_2^* & U_1^* \\
		\end{array}
		\right)=I=\left(
		\begin{array}{cc}
			U_1^* & U_3^* \\
			U_2^* & U_1^* \\
		\end{array}
		\right)\left(
		\begin{array}{cc}
			U_1 & U_2 \\
			U_3 & U_1 \\
		\end{array}
		\right).$$
		A simple matrix computation will give the following: 
		\begin{equation}\label{u1}
			U_1U_1^*+U_2U_2^*=I= U_3U_3^*+U_1U_1^*,  \quad U_1U_3^*+U_2U_1^*=0 
		\end{equation}
		\begin{equation}\label{u2}
			U_1^*U_1+U_3^*U_3=I= U_2^*U_2+U_1^*U_1,  \quad U_1^*U_2+U_3^*U_1=0.
		\end{equation}
		Using equations \eqref{u1} and \eqref{u2}, we get 
		$$ N_2^*N_3= 2U_1^*\left(U_1^2- U_2U_3\right) = 2U_1 =N_2. $$
		Also with the help of equations \eqref{u1}, \eqref{u2} and fact that $N_1, N_2$ and $N_3$ are commuting normal operators, we have 
		\begin{align*}
			N_3^*N_3&= \left({U_1^*}^2-U_3^*U_2^*\right)\left(U_1^2-U_2U_3\right)\\
			&={U_1^*}^2U_1^2+{U_1^*}U_3^*U_1U_3+U_3^*U_1^*U_3U_1+U_3^*\left(I-U_1^*U_1\right) U_3\\
			&={U_1^*}^2U_1^2+{U_1^*}U_1\left(I-U_1^*U_1\right)+U_3^*U_3 = I.
		\end{align*}
		Similarly, we can also show that $N_3N_3^*=I$. Thus, by Theorem \ref{Gamma-unitary}, the pair $(N_2, N_3)$ is a $\Gamma-$ unitary. Now, 
		$$I-\frac{N_2N_2^*}{4}= I-U_1U_1^*=U_3U_3^*=N_1N_1^*=N_1^*N_1.$$ 
		Therefore $(2)$ holds.
		
		$\text {Proof of }(2)\Rightarrow(1):$ The operators $N_1, N_2$ and $N_3$ are commuting normal by assumption. Consider the $C^*$-algebra $\mathcal{C}$ generated by the operators $N_1, N_3$, and $N_3$. Since $\mathcal{C}$ is commutative, it is isometrically isomorphic to $C\left(\sigma_{T}(N_1. N_2, N_3)\right)$ by the Galfand map. The inverse of the Gelfand map takes coordinate functions $x_j$ to $N_j.$ We need to show that the Taylor joint spectrum of the triple $(N_1, N_2, N_3)$ is contained in the distinguished boundary of the pentablock. To that end, let $(x_1, x_2, x_3)\in \sigma_{T}(N_1, N_2, N_3)$. Using the relation, $N_1^{*} N_1 = I - \frac{N_2^{*}N_2}{4}$ and the fact that the inverse Gelfand sends $x_j$ to $N_j$, we get 
		$$|x_1|^2=1-\frac{|x_2|^2}{4} \quad \text{ on } b\mathcal{P}.$$
		Also, since $(N_2, N_3)$ is a $\Gamma$-unitary, so $(x_1, x_2)\in b\Gamma$. Hence, by Therem \ref{dist} $(x_1, x_2, x_3)\in b\mathcal{P}$.
		
		$\text {Proof of }(1)\Rightarrow(2):$ Suppose $(N_1, N_2, N_3)$ is a $\mathcal{P}$- unitary. Then $N_1, N_2$, and $N_3$ are commuting normal operators. Therefore, the ${C}^*$ algebra generated by  $N_1, N_2$, and $N_3$, say $\mathcal{C}$, is commutative. Thus, by the Gelfand theory, $\mathcal{C}$ is isometrically isomorphic, via the Gelfand map,  to $C\left(\sigma_{T}(N_1. N_2, N_3)\right)$. Moreover, the Gelfand map sends $N_j$ to the coordinate functions $x_j.$ Any point $(x_1, x_2, x_3)$ on the distinguished boundary of $\mathcal{P}$ satisfies the following relations:
		\begin{equation}\label{distpoint}
			|x_{1}|^{2} = 1 - \frac{|x_{2}|^{2}}{4}, \quad x_{2} = \overline{x_{2}} x_{3}, \quad |x_{3}| =1 \quad \text{and} \quad |x_{2}| \leq 2.
		\end{equation}
		Now $(2)$ follows by observing that \eqref{distpoint} holds for the operators $N_{1},N_{2},N_{3}.$
		
		Proof of $(2) \Rightarrow (3):$ Set $U_{1} = \frac{1}{2} N_{2}, U_{2} = - N_{1}^{*} N_{3}, \text{ and } U_{3} = N_{1}.$ Clearly,  $U_{1}, U_{2},$ and $U_{3}$ are commuting normal operators. A direct matrix computation will give that the block operator matrix $U = \begin{bmatrix}
			U_{1} & U_{2} \\
			U_{3} & U_{1}
		\end{bmatrix}$ 
		is unitary. 
	\end{proof}
	\section{Wold-type decomposition for pentablock isometries}\label{WD}
	
	As the name suggests, this section aims to discuss pentablock isometries and the structure of pentablock isometries, i.e., Wold-type decomposition for pentablock isometries. As mentioned in the introduction, an additional term appears in the Wold-type decomposition. We will provide a functional model for that term. We shall start this section with the following definition. 
	\begin{definition}
		A commuting triple $(V_1, V_2, V_3)$ on $\cH$ is called a $\cP$-isometry if there exists a Hilbert space $\cK$ containing $\cH$ and a $\cP$-unitary $(\tilde{V_1}, \tilde{V_2}, \tilde{V_3})$ on $\cK$ such that $\cH$ is a joint invariant subspace of $(\tilde{V_1}, \tilde{V_2}, \tilde{V_3})$ and
		$$ V_1= \tilde{V_1}|_{\cH},  \quad V_3= \tilde{V_3}|_{\cH} \quad and \quad V_3= \tilde{V_3}|_{\cH}.$$
		In other words, a pentablock isometry is the restriction of a pentablock unitary to a joint invariant subspace. 
	\end{definition}
	Note that if $(V_{1},V_{2},V_{3})$ is a $\cP-$isometry, then $V_{3}$ is an isometry. Let us observe two basic examples of $\cP-$isometries. 
	\begin{enumerate}
		\item Let $V_1$ and $ V_2$ be two commuting isometries. Then $(V_1, 0, V_2)$ is a $\cP-$ isometry.
		\item Let $V$ be an isometry. Then $(0, 2V, V)$ is a $\cP-$ isometry.
	\end{enumerate}
	
	The following lemma gives a necessary condition for a triple to be a pentblock isometry.  
	\begin{lemma}\label{in}
		Let $(V_{1},V_{2},V_{3})$ be a $\cP-$isometry on a Hilbert space $\cH.$ Then $(V_{2},V_{3})$ is a $\Gamma-$isometry and 
		$$ V_{1}^{*} V_{1} = I - \frac{1}{4} V_{2}^{*}V_{2}.$$
	\end{lemma}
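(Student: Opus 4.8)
The plan is to realize the $\cP$-isometry concretely as the restriction of a $\cP$-unitary and then push the algebraic relations of Theorem~\ref{P-unitary}(2) down to the invariant subspace. By definition there is a Hilbert space $\cK \supseteq \cH$ and a $\cP$-unitary $(\tilde V_1, \tilde V_2, \tilde V_3)$ on $\cK$ with $\cH$ joint invariant and $V_j = \tilde V_j|_{\cH}$. By Theorem~\ref{P-unitary}, $(\tilde V_2, \tilde V_3)$ is a $\Gamma$-unitary and $\tilde V_1^{*}\tilde V_1 = I_{\cK} - \tfrac14 \tilde V_2^{*}\tilde V_2$. Since $\cH$ is invariant under $\tilde V_2$ and $\tilde V_3$, the pair $(V_2, V_3)$ is by definition a $\Gamma$-isometry, which disposes of the first assertion immediately.

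For the identity $V_1^{*}V_1 = I_{\cH} - \tfrac14 V_2^{*}V_2$, the key point is that although $\tilde V_1$ restricts to $V_1$, its adjoint does \emph{not} restrict to $V_1^{*}$ in general; so I cannot simply compress the operator identity. Instead I would compute, for $x \in \cH$,
\[
\|V_1 x\|^2 = \|\tilde V_1 x\|^2 = \langle \tilde V_1^{*}\tilde V_1 x, x\rangle = \|x\|^2 - \tfrac14\langle \tilde V_2^{*}\tilde V_2 x, x\rangle = \|x\|^2 - \tfrac14\|\tilde V_2 x\|^2 = \|x\|^2 - \tfrac14\|V_2 x\|^2,
\]
where the last step again uses $\tilde V_2 x = V_2 x$ because $x \in \cH$. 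This shows $\langle V_1^{*}V_1 x, x\rangle = \langle (I_{\cH} - \tfrac14 V_2^{*}V_2)x, x\rangle$ for all $x \in \cH$; since both sides are self-adjoint operators on $\cH$, a polarization argument (or simply the fact that a bounded self-adjoint operator is determined by its quadratic form) yields $V_1^{*}V_1 = I_{\cH} - \tfrac14 V_2^{*}V_2$.

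I expect no genuine obstacle here — the only subtlety to be careful about is precisely the one flagged above, namely that one must work with norms/quadratic forms on $\cH$ rather than naively restricting adjoints, because $P_{\cH}\tilde V_1^{*}|_{\cH} \neq V_1^{*}$ unless $\cH$ is reducing. Everything else is a direct transcription of Theorem~\ref{P-unitary} together with the definitions of $\cP$-isometry and $\Gamma$-isometry.
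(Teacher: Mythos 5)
Your proposal is correct and follows essentially the same route as the paper: restrict the defining $\cP$-unitary extension, invoke Theorem~\ref{P-unitary}(2), and transfer the identity $\tilde V_1^{*}\tilde V_1 = I - \tfrac14\tilde V_2^{*}\tilde V_2$ to $\cH$. Your quadratic-form argument merely makes explicit the step the paper states without elaboration (and correctly flags the only subtlety, that adjoints do not restrict to invariant subspaces).
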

	\begin{proof}
		Since $(V_{1},V_{2},V_{3})$ is a $\cP-$ isometry on $\cH,$ there exists a Hilbert space $\cK \supseteq \cH$ and a $\cP-$unitary $(N_{1},N_{2},N_{3})$ on $\cK$ such that $\cH$ is a joint invariant subspace for each $N_{i}$ and $N_{i}|_{\cH} = V_{i}.$ Clearly, $(N_{2},N_{3})$ is a $\Gamma-$unitary. Hence $(V_{2},V_{3})$ is a $\Gamma-$isometry. The equality $N_{1}^{*} N_{1} = I - \frac{1}{4} N_{2}^{*} N_{2}$ gives $V_{1}^{*}V_{1} = I - \frac{1}{4} V_{2}^{*} V_{2}.$	
	\end{proof}
	There is a natural question to ask here: Whether the converse of Lemma \ref{in} holds true or not. In section \ref{P-Iso}, we answer this question affirmatively.
	
	If $(V_{1},V_{2},V_{3})$ is a $\cP-$isometry and $V_{3}$ is unitary, then by Lemma \ref{in} and Theorem \ref{Gamma-unitary}, $(V_{2},V_{3})$ is a $\Gamma-$unitary. Moreover, $V_{1}$ commutes with $V_{1}^{*}V_{1}.$ Indeed, 
	$$V_{1} V_{1}^{*}V_{1} = V_{1} - \frac{1}{4} V_{1} V_{2}^{*}V_{2} = V_{1} - \frac{1}{4} V_{2}^{*}V_{2}V_{1} = V_{1}^{*} V_{1}^{2}.$$ This motivates us to make the following definition:
	
	\begin{definition}
		A $\cP-$isometry $(V_{1}, V_{2}, V_{3})$ is called a quasi $\cP-$unitary if $V_{3}$ is a unitary operator. 
	\end{definition}
	The Wold-type decomposition for $\cP-$ isometries is as follows. 
	\begin{thm}
		Let $(V_{1},V_{2},V_{3})$ be a $\cP-$isometry on a Hilbert space $\cH.$ Then there is a decomposition of $\cH$ into a direct sum $\cH = \cH_{1} \oplus \cH_{2}$ satisfying the following conditions:
		\begin{enumerate}
			\item the subspaces $\cH_{1}$ and $\cH_{2}$ are reducing subspaces for each $V_{i},$
			\item if $R_{i} = V_{i}|_{\cH_{1}},$ then the triple $(R_{1}, R_{2}, R_{3})$ is a quasi $\cP-$unitary, and
			\item  if $W_{i} = V_{i}|_{\cH_{2}},$ then the triple $(W_{1}, W_{2}, W_{3})$ is a pure $\cP-$ isometry.
		\end{enumerate}
	\end{thm}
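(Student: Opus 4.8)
The plan is to build the decomposition from the Wold-type decomposition of the single isometry $V_{3}$ and then check that the two pieces are compatible with $V_{1}$ and $V_{2}$. Concretely, write $\cH = \cH_{1}\oplus\cH_{2}$ where $\cH_{1} = \bigcap_{n\ge 0} V_{3}^{n}\cH$ is the unitary part of $V_{3}$ and $\cH_{2} = \bigoplus_{n\ge 0} V_{3}^{n}(\ker V_{3}^{*})$ is the pure (shift) part, so that $V_{3}|_{\cH_{1}}$ is unitary and $V_{3}|_{\cH_{2}}$ is a pure isometry. The first task is to show that $\cH_{1}$ and $\cH_{2}$ reduce all three $V_{i}$. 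Since $V_{1}, V_{2}$ commute with $V_{3}$, it is immediate that $V_{1}\cH_{1}\subseteq\cH_{1}$ and $V_{2}\cH_{1}\subseteq\cH_{1}$ (apply $V_{i}$ to $V_{3}^{n}\cH$ and use commutation); the substance is that $\cH_{1}$ is also invariant under $V_{1}^{*}$ and $V_{2}^{*}$, equivalently that $\cH_{2}$ is invariant under $V_{1}, V_{2}$.

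For that I would use Lemma \ref{in}: $(V_{2},V_{3})$ is a $\Gamma$-isometry and $V_{1}^{*}V_{1} = I - \tfrac14 V_{2}^{*}V_{2}$. The $\Gamma$-isometry structure already gives (by the known Wold decomposition for $\Gamma$-isometries, e.g. from \cite{AY, TPS-Adv}) that $\cH_{1}, \cH_{2}$ reduce both $V_{2}$ and $V_{3}$, with $(V_{2}|_{\cH_{1}}, V_{3}|_{\cH_{1}})$ a $\Gamma$-unitary and $(V_{2}|_{\cH_{2}}, V_{3}|_{\cH_{2}})$ a pure $\Gamma$-isometry. It remains to handle $V_{1}$. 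Here the key identity is the one already noted just before the definition of quasi $\cP$-unitary: from $V_{1}^{*}V_{1} = I - \tfrac14 V_{2}^{*}V_{2}$ and the commutation of $V_{1}, V_{2}$ one gets $V_{1}V_{1}^{*}V_{1} = V_{1}^{*}V_{1}^{2}$ on the part where things are unitary, and more usefully, on all of $\cH$ one has $V_{1}^{*}V_{1} = I - \tfrac14 V_{2}^{*}V_{2}$, so $V_{1}^{*}V_{1}$ lies in the von Neumann algebra generated by $V_{2}$. Since $\cH_{1}$ and $\cH_{2}$ reduce $V_{2}$, they reduce $V_{1}^{*}V_{1}$; combined with $V_{1}\cH_{1}\subseteq\cH_{1}$ and $V_{1}\cH_{2}\subseteq\cH_{2}$ (the latter because for $h\in\cH_{2}$, $P_{\cH_{1}}V_{1}h$ can be tested against $\cH_{1}$ using adjoints of $V_{3}$ and commutation), a short argument shows $V_{1}^{*}$ also preserves the splitting. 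The cleanest route: show $P_{\cH_{1}}$ commutes with $V_{1}$. We know $P_{\cH_{1}}$ commutes with $V_{2}$ and $V_{3}$; then $P_{\cH_{1}}V_{1}V_{1}^{*}V_{1}P_{\cH_{1}} = P_{\cH_{1}}V_{1}(I-\tfrac14V_{2}^{*}V_{2})V_{1}P_{\cH_{1}}$, and using $V_{1}\cH_{1}\subseteq\cH_{1}$ together with the fact that $V_{1}$ is a partial isometry-like object (indeed $\|V_{1}h\|^{2} = \|h\|^{2} - \tfrac14\|V_{2}h\|^{2}$) one deduces $V_{1}$ maps $\cH_{1}^{\perp}=\cH_{2}$ into $\cH_{2}$ by a norm computation: for $h\in\cH_{2}$, $\|P_{\cH_{1}}V_{1}h\|^{2} + \|P_{\cH_{2}}V_{1}h\|^{2} = \|V_{1}h\|^{2} = \|h\|^{2} - \tfrac14\|V_{2}h\|^{2} = \|h\|^{2}-\tfrac14\|V_{2}P_{\cH_{2}}h\|^{2}$, and on the other hand applying the same identity to the $\cH_{1}$-component forces $P_{\cH_{1}}V_{1}h = 0$.

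Once the reduction (1) is established, parts (2) and (3) are essentially bookkeeping: set $R_{i} = V_{i}|_{\cH_{1}}$ and $W_{i} = V_{i}|_{\cH_{2}}$. Since restricting a $\cP$-isometry to a reducing subspace keeps it a $\cP$-isometry (the ambient $\cP$-unitary restricts appropriately, or one invokes that restrictions to joint reducing subspaces of $\cP$-isometries are $\cP$-isometries just as for contractions), $(R_{1},R_{2},R_{3})$ and $(W_{1},W_{2},W_{3})$ are both $\cP$-isometries. By construction $R_{3}=V_{3}|_{\cH_{1}}$ is unitary, so $(R_{1},R_{2},R_{3})$ is a quasi $\cP$-unitary by definition; and $W_{3}=V_{3}|_{\cH_{2}}$ is a pure isometry by construction, so $(W_{1},W_{2},W_{3})$ is a pure $\cP$-isometry. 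I expect the main obstacle to be precisely the step showing $\cH_{1}$ reduces $V_{1}$ (equivalently $V_{1}\cH_{2}\subseteq\cH_{2}$), since $V_{1}$ is not an isometry and not normal, so one cannot simply quote a Wold decomposition for it; the norm identity $\|V_{1}h\|^{2}=\|h\|^{2}-\tfrac14\|V_{2}h\|^{2}$ from Lemma \ref{in} together with the already-known reduction for $(V_{2},V_{3})$ is the lever that makes it work, and getting that argument tight is the heart of the proof.
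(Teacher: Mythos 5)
Your overall strategy coincides with the paper's: both take the Wold decomposition of $V_{3}$ (equivalently, of the $\Gamma$-isometry $(V_{2},V_{3})$, via Theorem 2.6 of \cite{AY}) to obtain $\cH=\cH_{1}\oplus\cH_{2}$ with $V_{2},V_{3}$ block diagonal, $(R_{2},R_{3})$ a $\Gamma$-unitary and $(W_{2},W_{3})$ a pure $\Gamma$-isometry, and both then reduce the theorem to showing that $V_{1}$ is block diagonal for this splitting. You also correctly isolate the one nontrivial point: $V_{1}\cH_{1}\subseteq\cH_{1}$ is immediate from $V_{1}V_{3}^{n}\cH\subseteq V_{3}^{n}\cH$, so everything hinges on $V_{1}\cH_{2}\subseteq\cH_{2}$, i.e.\ on the vanishing of the block $V_{12}=P_{\cH_{1}}V_{1}|_{\cH_{2}}$. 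The bookkeeping in parts (2) and (3) is then the same in both treatments.

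The gap is that your argument for this key step is not a proof. The identity $\|V_{1}h\|^{2}=\|h\|^{2}-\tfrac14\|V_{2}h\|^{2}$ computes the \emph{total} norm of $V_{1}h$ and gives no separate handle on $\|P_{\cH_{1}}V_{1}h\|$; ``applying the same identity to the $\cH_{1}$-component'' is not a defined operation. To conclude $P_{\cH_{1}}V_{1}h=0$ this way you would need to know independently that $\|P_{\cH_{2}}V_{1}h\|^{2}=\|h\|^{2}-\tfrac14\|W_{2}h\|^{2}$, i.e.\ that $(V_{22},W_{2},W_{3})$ already satisfies the $\cP$-isometry identity on $\cH_{2}$ --- which is equivalent to what is being proved. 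Similarly, the observation that $P_{\cH_{1}}$ commutes with $V_{1}^{*}V_{1}=I-\tfrac14V_{2}^{*}V_{2}$ is correct but does not yield commutation with $V_{1}$ itself. The paper proceeds differently: writing $V_{1}=\sbm{V_{11}&V_{12}\\V_{21}&V_{22}}$, commutation with $V_{3}$ gives $V_{21}R_{3}=W_{3}V_{21}$ and $V_{12}W_{3}=R_{3}V_{12}$, hence $R_{3}^{*n}V_{21}^{*}=V_{21}^{*}W_{3}^{*n}$ and $W_{3}^{*n}V_{12}^{*}=V_{12}^{*}R_{3}^{*n}$, and the strong convergence $W_{3}^{*n}\to0$ together with $R_{3}$ unitary gives $\|V_{21}^{*}h\|=\|V_{21}^{*}W_{3}^{*n}h\|\to0$, killing $V_{21}$, with the dual relation invoked for $V_{12}$. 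Be warned that for the $V_{12}$ block commutation with $V_{3}$ alone cannot be decisive: the inclusion $\ell^{2}(\mathbb{N})\hookrightarrow\ell^{2}(\mathbb{Z})$ intertwines the unilateral shift with the bilateral shift exactly as in $V_{12}W_{3}=R_{3}V_{12}$ without vanishing. So a complete argument must genuinely exploit the extra structure from Lemma \ref{in} --- for instance the relations $V_{11}^{*}V_{12}=0$ and $V_{12}^{*}V_{12}+V_{22}^{*}V_{22}=I-\tfrac14W_{2}^{*}W_{2}$ obtained by expanding $V_{1}^{*}V_{1}=I-\tfrac14V_{2}^{*}V_{2}$ in block form once $V_{21}=0$ is known. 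Your write-up gestures at this lever but never actually pulls it, so the heart of the proof is missing.
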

	\begin{proof}
		Since $(V_{1},V_{2},V_{3})$ is a $\cP-$isometry, $(V_{2},V_{3})$ is a $\Gamma-$isometry. Applying Wold-type decomposition for $\Gamma-$isometry $(V_{2},V_{3}),$ we get a decomposition of $\cH$ into a direct sum $\cH = \cH_{1} \oplus \cH_{2}$ and with respect to this decomposition $V_{1}$ and $V_{2}$ have the following block matrix representation
		$$V_{2} = \begin{bmatrix}
			R_{2} & 0 \\ 0 & W_{2}
		\end{bmatrix} \quad \text{ and } \quad V_{3} = \begin{bmatrix}
			R_{3} & 0 \\ 0 & W_{3}
		\end{bmatrix}$$
		where $(R_{2},R_{3})$ is a $\Gamma-$unitary on $\cH_{1}$ and $(W_{2}, W_{3})$ is a pure $\Gamma-$isometry on $\cH_{2},$ see Theorem 2.6 in \cite{AY}. Note that $R_{3}$ is unitary, and $W_{3}$ is a pure isometry. If we now write $V_{1}$ according to this decomposition of $\cH,$ then we let its block matrix be 
		$$V_{1} = \begin{bmatrix}
			V_{11} & V_{12} \\ V_{21} & V_{22}
		\end{bmatrix}.$$ 
		By commutativity of $V_{1}$ and $V_{3},$ we get that $V_{12} W_{3} = R_{3}V_{12}$ and  $V_{21} R_{3} = W_{3} V_{21}.$ Thus, $W_{3}^{*n} V_{12}^{*} = V_{12}^{*} R_{3}^{*n}$ and $R_{3}^{*n}V_{21}^{*} = V_{21}^{*} W_{3}^{*n}$ for any $n\geq 1.$ Since $W_{3}$ is a pure isometry, $W_{3}^{*n}$ converges to $0$ strongly. Therefore, $V_{12} = V_{21} = 0.$ Set $R_{1} = V_{11}$ and $W_{1} = V_{22}.$ Clearly, both the triples $(R_{1},R_{2},R_{3})$ and $(W_{1},W_{2},W_{3})$ are $\cP-$isometries. The rest of the proof follows by noting that $R_{3}$ is unitary and $W_{3}$ is a pure isometry. 
	\end{proof}
	\subsection{Characterization of quasi $\cP-$unitaries} 
	The lemma below gives a class of examples which are quasi $\cP-$unitary but not $\cP$-unitary.
	\begin{lemma}
		Let $(N_{1},N_{2},N_{3})$ be a $\cP-$unitary on a Hilbert space $\cE.$ Then the triple $(M_{z} \otimes N_{1}, I \otimes N_{2}, I \otimes N_{3})$ on $H^{2} \otimes \cE$ is quasi $\cP-$unitary.
	\end{lemma}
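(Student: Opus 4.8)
The plan is to realise the given triple as the restriction of a genuine $\cP$-unitary to a joint invariant subspace, and then to observe that its third component is unitary. Concretely, I regard $H^{2}\otimes\cE$ as a closed subspace of $L^{2}(\bT)\otimes\cE$ and let $M_{z}$ also denote multiplication by the coordinate function on $L^{2}(\bT)$, which is a \emph{unitary} (the bilateral shift). The first step is to show that the triple $(M_{z}\otimes N_{1},\, I\otimes N_{2},\, I\otimes N_{3})$ on $L^{2}(\bT)\otimes\cE$ is a $\cP$-unitary. Once this is done, the conclusion is quick: $H^{2}\otimes\cE$ is invariant under each of $M_{z}\otimes N_{1}$, $I\otimes N_{2}$, $I\otimes N_{3}$ (the first because $M_{z}H^{2}\subseteq H^{2}$, the other two because they act only on the second tensor factor), so the restriction of that $\cP$-unitary to $H^{2}\otimes\cE$ is exactly the triple in the statement, which is therefore a $\cP$-isometry; and since $N_{3}$ is unitary on $\cE$, its third component $I\otimes N_{3}$ is unitary on $H^{2}\otimes\cE$, so the triple is a quasi $\cP$-unitary by definition.

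To prove that the lifted triple is a $\cP$-unitary I would verify condition $(2)$ of Theorem \ref{P-unitary}. First, $M_{z}\otimes N_{1}$ is normal: $M_{z}$ is unitary on $L^{2}(\bT)$ and $N_{1}$ is normal, so $(M_{z}\otimes N_{1})^{*}(M_{z}\otimes N_{1})=I\otimes N_{1}^{*}N_{1}=I\otimes N_{1}N_{1}^{*}=(M_{z}\otimes N_{1})(M_{z}\otimes N_{1})^{*}$. Next, since $(N_{1},N_{2},N_{3})$ is a $\cP$-unitary, Theorem \ref{P-unitary} gives that $(N_{2},N_{3})$ is a $\Gamma$-unitary, so by Theorem \ref{Gamma-unitary} we have that $N_{3}$ is unitary, $N_{2}=N_{2}^{*}N_{3}$, and $r(N_{2})\le 2$. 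Tensoring with the identity preserves each of these facts: $I\otimes N_{3}$ is unitary, $I\otimes N_{2}=(I\otimes N_{2})^{*}(I\otimes N_{3})$, and $r(I\otimes N_{2})=r(N_{2})\le 2$ because $\sigma(I\otimes N_{2})=\sigma(N_{2})$; hence $(I\otimes N_{2},I\otimes N_{3})$ is a $\Gamma$-unitary by Theorem \ref{Gamma-unitary}. Finally, the identity $N_{1}^{*}N_{1}=I-\frac{1}{4}N_{2}^{*}N_{2}$ from Theorem \ref{P-unitary} tensors to
$$(M_{z}\otimes N_{1})^{*}(M_{z}\otimes N_{1})=I\otimes N_{1}^{*}N_{1}=I\otimes\Big(I-\tfrac{1}{4}N_{2}^{*}N_{2}\Big)=I-\tfrac{1}{4}(I\otimes N_{2})^{*}(I\otimes N_{2}).$$
By Theorem \ref{P-unitary}, these three facts show that $(M_{z}\otimes N_{1},I\otimes N_{2},I\otimes N_{3})$ is a $\cP$-unitary on $L^{2}(\bT)\otimes\cE$, completing the argument.

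The only point that requires genuine care is this first step, namely that the lifted triple is a true $\cP$-unitary and not merely a commuting normal triple whose Taylor joint spectrum sits in $\overline{\cP}$; the algebraic characterisation in Theorem \ref{P-unitary} is precisely what reduces this to the three routine identities above and bypasses any direct computation of the Taylor joint spectrum of a tensor-type operator tuple. If one prefers a spectral argument instead, one can use a joint spectral measure for the commuting normals $N_{1},N_{2},N_{3}$ and for $M_{z}$ to see that the Taylor joint spectrum of the lifted triple is contained in $\{(\zeta x_{1},x_{2},x_{3}):\zeta\in\bT,\ (x_{1},x_{2},x_{3})\in b\cP\}$, and then note that each such point again lies in $b\cP$ by part $(2)$ of Theorem \ref{dist}, since $(x_{2},x_{3})\in b\Gamma$ and $|\zeta x_{1}|^{2}=|x_{1}|^{2}=1-\frac{1}{4}|x_{2}|^{2}$.
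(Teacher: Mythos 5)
Your proof is correct and follows essentially the same route as the paper: lift to the bilateral shift on $L^{2}(\mathbb{T})\otimes\mathcal{E}$, check that the lifted triple is a $\mathcal{P}$-unitary, and restrict to the invariant subspace $H^{2}\otimes\mathcal{E}$. The only difference is that you spell out the verification of condition (2) of the characterization theorem for the lifted triple, a step the paper leaves as an appeal to that theorem without further detail.
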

	\begin{proof}
		It is enough to show that  $(M_{z} \otimes N_{1}, I \otimes N_{2}, I \otimes N_{3})$ is a $\cP-$isometry because $I \otimes N_{3}$ is a unitary. Consider the multiplication operator $\tilde{M_{z}},$ multiplication by $z$ on $L^{2}(\mathbb{T})$. Then by Theorem \ref{P-unitary}, the triple ($\tilde{M_z}\otimes N_1, I\otimes N_2, I\otimes N_3)$ on Hilbert space $L^2\otimes\cE$ is a $\cP-$unitary. The restriction of this $\cP-$unitary to $H^{2} \otimes \cE$ is our triple  $(M_{z} \otimes N_{1}, I \otimes N_{2}, I \otimes N_{3}).$ This proves that the given triple is a $\cP-$isometry. 
	\end{proof}
	The following theorem provides a complete characterization of quasi $\cP-$unitaries.
	\begin{thm}\label{quasi}
		Let $(R_{1},R_{2},R_{3})$ be a quasi $\cP-$unitary on a Hilbert space $\cH.$ Then the triple $(R_{1},R_{2},R_{3})$ is unitarily equivalent to the commuting triple 
		\begin{equation} \label{eq}
			\left( \begin{bmatrix}
				U_{1} & 0 \\ 0 & M_{z} \otimes N_{1}
			\end{bmatrix}, \begin{bmatrix}
				U_{2} & 0 \\ 0 & I \otimes N_{2} 
			\end{bmatrix}, \begin{bmatrix}
				U_{3} & 0 \\ 0 & I \otimes N_{3}
			\end{bmatrix}\right)
		\end{equation} on $\cE \oplus (H^{2} \otimes \cF)$ for some Hilbert spaces $\cE$ and $\cF,$ where $(U_{1}, U_{2}, U_{3})$ and $(N_{1},N_{2},N_{3})$ are $\cP-$unitaries on Hilbert spaces $\cE$ and $\cF$ respectively. 
	\end{thm}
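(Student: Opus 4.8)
The plan is to recognise $R_1$ as a quasinormal operator and then to feed it into the classical structure theorem for quasinormal operators (A.~Brown), while carrying the commuting pair $(R_2,R_3)$ through the resulting decomposition.

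The first step is to unpack the hypotheses algebraically. Since $(R_1,R_2,R_3)$ is a $\cP$-isometry with $R_3$ unitary, Lemma~\ref{in} together with Theorem~\ref{Gamma-unitary} shows that $(R_2,R_3)$ is a $\Gamma$-unitary; in particular $R_2$ is normal, $R_2=R_2^*R_3$, and $R_3$ is unitary. Writing $R_2^*=R_3^*R_2$ and using that $R_1$ commutes with $R_2$, with $R_3$ and with $R_3^*$, one obtains $R_1R_2^*=R_2^*R_1$ and $R_1R_3^*=R_3^*R_1$; taking adjoints, \emph{$R_2$ and $R_3$ commute with $R_1^*$ as well as with $R_1$}. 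In particular $R_1$ commutes with $R_1^*R_1=I-\tfrac14R_2^*R_2$ (the quasinormality already noted just before the definition of a quasi $\cP$-unitary), and the positive operator $D:=(I-\tfrac14R_2^*R_2)^{1/2}=(R_1^*R_1)^{1/2}$, being a continuous function of $R_2^*R_2$, commutes with $R_1,R_2,R_3$ and with their adjoints.

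Next I would invoke the structure theorem for quasinormal operators applied to $R_1$ (equivalently, one may reprove it here by applying the Wold decomposition to the isometric factor in the polar decomposition $R_1=VD$): there is an orthogonal decomposition $\cH=\cE\oplus\cH'$ into subspaces reducing $R_1$, with $R_1|_{\cE}$ normal and $R_1|_{\cH'}$ unitarily equivalent to $M_z\otimes N_1$ on $H^2\otimes\cF$ for a Hilbert space $\cF$ and an injective positive operator $N_1$ on $\cF$. The key observation is that $\cE$ and $\cH'$ are determined purely by $R_1$ and $R_1^*$ — e.g.\ $\cH'$ is the closed linear span of $\{R_1^nx:n\ge0,\ x\in\ker R_1^*\ominus\ker R_1\}$ — so they reduce every operator commuting with both $R_1$ and $R_1^*$, hence, by the previous step, they reduce $R_2$ and $R_3$. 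On $\cE$ the triple $(R_1|_{\cE},R_2|_{\cE},R_3|_{\cE})$ then meets the hypotheses of Theorem~\ref{P-unitary}(2) — $R_1|_{\cE}$ normal, $(R_2|_{\cE},R_3|_{\cE})$ a $\Gamma$-unitary, and $(R_1|_{\cE})^*(R_1|_{\cE})=I-\tfrac14(R_2|_{\cE})^*(R_2|_{\cE})$ — and is therefore a $\cP$-unitary $(U_1,U_2,U_3)$.

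It remains to pin down the action on $\cH'$. One takes $\cF=\cH'\ominus\overline{R_1\cH'}=\ker R_1^*\cap\cH'$, the wandering subspace of the pure isometry $V$ occurring in the polar decomposition, and the standard unitary $\Phi:\cH'\to H^2\otimes\cF$, $\sum_nV^nw_n\mapsto\sum_nz^n\otimes w_n$, under which $\Phi V\Phi^*=M_z\otimes I$ and $\Phi D\Phi^*=I\otimes N_1$ with $N_1:=D|_{\cF}$ (using that $D$ commutes with $V$ and leaves $\cF$ invariant). Since $R_2$ and $R_3$ commute with $R_1^*$ they preserve $\ker R_1^*$, hence $\cF$; and since they commute with $R_1$ and with $D$, the identity $R_2VD=VDR_2=VR_2D$ together with the injectivity of $D$ on $\cH'$ forces $R_2V=VR_2$ on $\cH'$, and likewise $R_3V=VR_3$. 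Putting $N_2:=R_2|_{\cF}$ and $N_3:=R_3|_{\cF}$ gives $\Phi R_2\Phi^*(z^n\otimes w)=\Phi R_2V^nw=\Phi V^nN_2w=z^n\otimes N_2w$, so $\Phi R_2\Phi^*=I\otimes N_2$, and similarly $\Phi R_3\Phi^*=I\otimes N_3$ while $\Phi R_1\Phi^*=M_z\otimes N_1$. Finally $N_1$ is positive, hence normal, $(N_2,N_3)$ is a $\Gamma$-unitary, and $N_1^*N_1=(D|_{\cF})^2=(I-\tfrac14R_2^*R_2)|_{\cF}=I-\tfrac14N_2^*N_2$, so Theorem~\ref{P-unitary} makes $(N_1,N_2,N_3)$ a $\cP$-unitary on $\cF$; then $\operatorname{id}_{\cE}\oplus\Phi$ implements the asserted unitary equivalence. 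The step I expect to be the main obstacle is confirming that the normal/pure splitting of the quasinormal operator $R_1$ is simultaneously reducing for $R_2$ and $R_3$ — which is exactly the purpose of the opening verification that $R_2$ and $R_3$ commute with $R_1^*$, not merely with $R_1$ — along with the routine but somewhat fiddly bookkeeping that upgrades ``$R_2$ commutes with the shift $V$'' to ``$R_2=I\otimes N_2$''.
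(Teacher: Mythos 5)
Your argument is correct and lands on the same normal/pure splitting of the quasinormal operator $R_1$ that the paper uses, but the route to it is genuinely different. The paper does not cite Brown's theorem: it first splits off $\ker R_1$ (shown to be a reducing subspace carrying a $\cP$-unitary), then, with $R_1$ injective, passes to the polar decomposition $R_1=V|R_1|$, transfers commutativity from $R_1$ to the isometry $V$ via injectivity, applies the Wold decomposition to $V$, and kills the off-diagonal blocks of $R_2$, $R_3$ and $|R_1|$ with the strong-convergence argument $W^{*n}\to 0$. Your key extra observation --- that $R_2=R_2^*R_3$ together with the unitarity of $R_3$ forces $R_2$ and $R_3$ to commute with $R_1^*$ as well as with $R_1$ --- does not appear in the paper, and it is what lets you obtain reducibility of the canonical decomposition directly; this is arguably cleaner, since the paper's strong-convergence trick disposes of one off-diagonal corner immediately but really needs the normality of the commuting operators (Fuglede) to handle the other. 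What your version owes in exchange is the appeal to Brown's structure theorem (which the paper in effect reproves inline) and two small verifications you should not skip: that $\ker R_1\subseteq\ker R_1^*$ for quasinormal $R_1$, so that your span characterization of $\cH'$ makes sense and is preserved by every operator commuting with $R_1$ and $R_1^*$; and that $\cF$ is reducing, not merely invariant, for $R_2$ and $R_3$ (needed for $(N_2,N_3)$ to be a $\Gamma$-unitary on $\cF$), which follows because $R_2^*$ and $R_3^*$ also commute with $R_1$ and $R_1^*$.
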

	\begin{proof}
		First, we shall prove that ${\rm Ker}R_{1}$ is a reducing subspace for $R_{1},R_{2}$ and $R_{3}.$ Trivially, ${\rm Ker} R_{1}$ is invariant under $R_{1}.$ Since $R_{1}$ commutes with $R_{1}^{*}R_{1},$ we get $R_{1}^{*}R_{1}R_{1}^{*} = R_{1}^{*2}R_{1}.$ It follows that for $h \in {\rm Ker} R_{1},$ $R_{1}^{*}h \in {\rm Ker} R_{1}^{*} R_{1} = {\rm Ker} R_{1}.$ Since $R_{1}$ commutes with $R_{2}$ and $R_{3},$ ${\rm Ker} R_{1}$ is invariant under $R_{2}$ and $R_{3}.$ Normality of $R_{2}$ and $R_{3}$ implies that $R_{1}$ commutes with $R_{2}^{*}$ and $R_{3}^{*}.$ Hence ${\rm Ker} R_{1}$ is invariant under $R_{2}^{*}$ and $R_{3}^{*}.$ This shows that ${\rm Ker} R_{1}$ is reducing for $R_{1},R_{2}$ and $R_{3}.$ Clearly, the triple $\left(R_{1}|_{{\rm Ker}R_{1}}, R_{2}|_{{\rm Ker}R_{1}},R_{3}|_{{\rm Ker}R_{1}} \right)$ is a $\cP-$unitary on ${\rm Ker}R_{1}.$
		
		From the discussion above, it is enough to prove the result for a quasi $\cP-$unitary $(R_{1}, R_{2}, R_{3})$ with $R_{1}$ injective. Let $V|R_{1}|$ be the polar decomposition of $R_{1}$ where $V$ is an isometry and $|R_{1}| = (R_{1}^{*}R_{1})^{1/2}.$ Since $R_{1}$ commutes with $R_{1}^{*}R_{1},$ 
		the isometry $V$ commutes with $|R_{1}|.$ The commutativity of $R_{1}$ with $R_{2}$ and the commutativity of $V$ with $|R_{1}|$ gives us the following
		$$|R_{1}| (VR_{2} - R_{2} V) = 0.$$
		The injectivity of $R_{1}$ will give us that $VR_{2} = R_{2}V.$ Similarly, we can show that $VR_{3} = R_{3}V.$ Let $V = U \oplus W$ be the Wold decomposition of isometry $V$ into its unitary part $U$ and the shift part $W.$ Suppose $\cH = \cH_{1} \oplus \cH_{2}$ is the corresponding decomposition of the whole space $\cH.$ Thus, $V$ has the following block matrix decomposition 
		$$ \begin{bmatrix}
			U & 0 \\ 0 & W
		\end{bmatrix} .$$ 
		If we now write $R_{2}$ according to this decomposition of $\cH,$ then we let its block matrix be 
		$$
		R_{2} = \begin{bmatrix}
			X_{11} & X_{12} \\ X_{21} & X_{22}
		\end{bmatrix}.$$
		By commutativity of $R_{2}$ and $V,$ we get that $X_{12} W = U X_{12}$ and $X_{21} U = W X_{21}.$ Thus, $W^{*n} X_{12}^{*} = X_{12}^{*}U^{*n}$ and $U^{*n} X_{21}^{*} = X_{21}^{*}W^{*n}$ for any $n\geq 1.$ Since $W$ is a shift, $W^{*n}$ converges to $0$ strongly. Therefore, $X_{12} = X_{21} = 0.$ Hence, $R_{2}$ is a block diagonal matrix.
		$$R_{2} = \begin{bmatrix}
			X_{11} & 0 \\ 0 & X_{22}
		\end{bmatrix}.$$
		Similarly, we can show that $|R_{1}|$ and $R_{3}$ can be written in the following matrix form
		$$|R_{1}| = \begin{bmatrix}
			P_{11} & 0 \\ 0 & P_{22} 
		\end{bmatrix} \quad \text{and} \quad R_{3} = \begin{bmatrix}
			Y_{11} & 0 \\ 0 & Y_{22}
		\end{bmatrix}$$
		With respect to the decomposition  $\cH = \cH_{1} \oplus \cH_{2}.$ Now, $R_{1}$ takes the form. 
		$$R_{1} = V |R_{1}|= \begin{bmatrix}
			U P_{11} & 0 \\ 0 & W P_{22}
		\end{bmatrix}.$$
		Clearly, both the triples $(UP_{11}, X_{11}, Y_{11})$ on $\cH_{1}$ and $(WP_{22}, X_{22}, Y_{22})$ on $\cH_{2}$ are quasi $\cP-$unitaries. Moreover, $U$ commutes with $P_{11},$ and $W$ commutes with $P_{22}.$ Since $U$ is unitary and $P_{11}$ is a positive operator, we get that $UP_{11}$ is a normal operator on $\cH_{1}.$ This proves that the triple $(UP_{11}, X_{11}, Y_{11})$ on $\cH_{1}$ is a $\cP-$unitary. Now we shall prove that the triple $(WP_{22}, X_{22}, Y_{22})$ on $\cH_{2}$ is unitarily equivalent to the triple $(M_{z} \otimes N_{1}, I \otimes N_{2}, I \otimes N_{3})$ on $H^{2} \otimes \cF$ for some Hilbert space $\cF,$ where $(N_{1},N_{2},N_{3})$ is a $\cP-$unitary on $\cF.$ Since $W$ is a pure isometry, there exists a Hilbert space $\cF$ such that $W$ is unitarily equivalent to $M_{z} \otimes I$ on $H^{2} \otimes \cF.$ It follows from the injectivity of $R_{1}$ that $W$ and $P_{22}$ commute with both $X_{22}$ and $Y_{22}.$ Thus, $(P_{22}, X_{22}, Y_{22})$ is a commuting triple of normal operators on $\cH_{2}$, which commute with the pure isometry $W.$ It follows that there exists a commuting triple of normal operators $(N_{1}, N_{2}, N_{3})$ on $\cF$ such that the triple $(P_{22}, X_{22}, Y_{22})$ is unitarily equivalent to the triple $(I \otimes N_{1}, I \otimes N_{2}, I \otimes N_{3})$ on $H^{2} \otimes \cF.$ Therefore, the triple $(UP_{22}, X_{22}, Y_{22})$ on $\cH_{2}$ is unitarily equivalent to the triple $(M_{z} \otimes N_{1}, I \otimes N_{2}, I \otimes N_{3})$ on $H^{2} \otimes \cF.$ Since $(UP_{22}, X_{22}, Y_{22})$ is a $\cP-$isometry, we get that 
		$$(M_{z} \otimes N_{1})^{*} (M_{z} \otimes N_{1}) = I - \frac{1}{4} (I \otimes N_{2})^{*} (I \otimes N_{2}).$$
		This implies that 
		$$N_{1}^{*}N_{1} = I - \frac{1}{4} N_{2}^{*}N_{2}.$$
		We shall apply Theorem \ref{P-unitary} to conclude that the triple $(N_{1},N_{2},N_{3})$ is  a $\cP-$unitary. This completes the proof.
	\end{proof}
	We have the following result, as an application of Theorem \ref{quasi}.
	\begin{corollary}\label{quasi cor}
		Let $(R_{1},R_{2},R_{3})$ be a commuting triple of bounded operators. Then the following are equivalent:
		\begin{enumerate}
			\item $(R_{1},R_{2},R_{3})$ is a quasi $\cP-$unitary;
			\item $(R_{2},R_{3})$ is a $\Gamma-$unitary and $R_{1}^{*}R_{1} = I - \frac{1}{4} R_{2}^{*}R_{2}.$
		\end{enumerate}
	\end{corollary}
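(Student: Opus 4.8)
The corollary is a quick consequence of the machinery already assembled, so I would prove the two implications separately; the direction $(2)\Rightarrow(1)$ carries essentially all the weight. For $(1)\Rightarrow(2)$: if $(R_1,R_2,R_3)$ is a quasi $\cP$-unitary then, by definition, it is a $\cP$-isometry with $R_3$ unitary, so Lemma \ref{in} applies and yields both that $(R_2,R_3)$ is a $\Gamma$-isometry and that $R_1^*R_1 = I - \frac{1}{4}R_2^*R_2$; since $R_3$ is unitary, the equivalence of $(1)$ and $(3)$ in Theorem \ref{Gamma-unitary} upgrades this $\Gamma$-isometry to a $\Gamma$-unitary. That is exactly $(2)$, and nothing more is needed in this direction.

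For $(2)\Rightarrow(1)$ the plan is to re-run the proof of Theorem \ref{quasi}. The key preliminary observation is that hypothesis $(2)$ already forces $R_1$ to be quasinormal: since $R_2$ is normal and commutes with $R_1$, Fuglede's theorem gives that $R_2^*$ commutes with $R_1$, hence $R_1$ commutes with $R_2^*R_2$ and therefore with $R_1^*R_1 = I - \frac{1}{4}R_2^*R_2$; taking adjoints also shows that $R_2$ and $R_3$ commute with $R_1^*R_1$, hence with $|R_1|$. With these commutations in hand one checks that the proof of Theorem \ref{quasi} never uses the full ``quasi $\cP$-unitary'' hypothesis beyond three facts --- $R_1$ is quasinormal, $(R_2,R_3)$ is a $\Gamma$-unitary, and $R_1^*R_1 = I - \frac{1}{4}R_2^*R_2$ --- all of which are now available (each of the last two passes to reducing subspaces, and wherever the original proof invokes ``a restriction of a quasi $\cP$-unitary is a quasi $\cP$-unitary'' one instead applies Theorem \ref{P-unitary}(2) to the relevant restriction). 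Running that argument therefore produces a unitary equivalence of $(R_1,R_2,R_3)$ with a triple of the model form \eqref{eq}, in which $(U_1,U_2,U_3)$ and $(N_1,N_2,N_3)$ are $\cP$-unitaries.

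It then suffices to note that the model in \eqref{eq} is itself a quasi $\cP$-unitary: the summand $(U_1,U_2,U_3)$ is a $\cP$-unitary, hence a quasi $\cP$-unitary; the summand $(M_z\otimes N_1, I\otimes N_2, I\otimes N_3)$ on $H^2\otimes\cF$ is a quasi $\cP$-unitary by the lemma immediately preceding Theorem \ref{quasi}; and a finite direct sum of quasi $\cP$-unitaries is again one. Hence $(R_1,R_2,R_3)$ is a quasi $\cP$-unitary, which is $(1)$. The one delicate point, and the main obstacle, is the bookkeeping just described: confirming that the proof of Theorem \ref{quasi} is genuinely ``hypothesis-neutral'', i.e. that every use there of the quasi $\cP$-unitary structure can be re-sourced from the three facts above, with the Fuglede step as the crucial device. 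A more self-contained alternative would invoke the classical structure theory of quasinormal operators to split $R_1$ as a normal summand plus a copy of $M_z\otimes A$, and then use the $\Gamma$-unitarity of $(R_2,R_3)$ together with the commutant of a pure shift to force $R_2$ and $R_3$ to be block-diagonal and constant on the shift part; I expect the route through Theorem \ref{quasi} to be shorter to write.
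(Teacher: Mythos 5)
Your proposal is correct and follows the paper's own route: the paper likewise treats $(1)\Rightarrow(2)$ as immediate from Lemma \ref{in} and Theorem \ref{Gamma-unitary}, and proves $(2)\Rightarrow(1)$ by observing that the argument of Theorem \ref{quasi} runs under hypothesis $(2)$ alone and lands in the model \eqref{eq}, which is a quasi $\cP$-unitary. Your Fuglede-based check that $(2)$ already yields the quasinormality of $R_{1}$ and the commutation of $R_{2},R_{3}$ with $|R_{1}|$ simply makes explicit the bookkeeping the paper leaves to the reader.
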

	\begin{proof}
		We only need to show that $(2)$ implies $(1).$ It is clear from the proof of Theorem \ref{quasi} that a commuting triple $(R_{1},R_{2},R_{3})$ satisfying $(2)$ is unitarily equivalent to the commuting triple in \eqref{eq}. Therefore, $(R_{1},R_{2},R_{3})$ is a quasi $\cP-$unitary.
	\end{proof}
	
	\section{Characterization of $\cP-$ isometries}\label{P-Iso}
	This section aims to provide an algebraic characterization of $\cP-$isometries. We have already discussed that if $(V_1, V_2, V_3)$ is a $\cP$-isometry, then $(V_2, V_3)$ is a $\Gamma$-isometry. There are many established characterizations for determining when a pair qualifies as a $\Gamma$-isometry, see \cite{AY-model, TPS-Adv}. Nevertheless, we will use the subsequent algebraic characterization for these pairs.
	
	\begin{thm}\label{Gamma-isomtery}
		Let $(S, P)$ be a pair of commuting operators on a Hilbert
		space $\cH$. Then the following are equivalent:
		\begin{enumerate}
			\item  The pair $(S, P)$ is a $\Gamma$-isometry;
			\item $S=S^*P, \quad  P^*P=I$\quad and \quad $\|S\|\leq 2$. 
			
		\end{enumerate}
	\end{thm}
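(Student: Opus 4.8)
The plan is to prove the equivalence of the two conditions characterizing $\Gamma$-isometries, mirroring and adapting the circle of ideas already used for $\cP$-unitaries in Theorem \ref{P-unitary} and for the $\Gamma$-unitary characterization in Theorem \ref{Gamma-unitary}. For the direction $(1)\Rightarrow(2)$, I would start from the definition of a $\Gamma$-isometry: $(S,P)$ is the restriction of a $\Gamma$-unitary $(\tilde S,\tilde P)$ on some larger space $\cK\supseteq\cH$ to a joint invariant subspace $\cH$. Since $\tilde P$ is a unitary and $\cH$ is invariant under $\tilde P$, the restriction $P=\tilde P|_{\cH}$ is an isometry, i.e.\ $P^*P=I$. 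From $\|S\|=\|\tilde S|_{\cH}\|\le\|\tilde S\|$ and the fact that for a $\Gamma$-unitary $\tilde S=\tilde S^*\tilde P$ with $r(\tilde S)\le 2$ (Theorem \ref{Gamma-unitary}(2))—and $\tilde S$ normal, so $\|\tilde S\|=r(\tilde S)\le 2$—we get $\|S\|\le 2$. The relation $S=S^*P$ is the subtle point: it follows by computing, for $h,k\in\cH$, that $\la S h,k\ra=\la\tilde S h,k\ra$, and using $\tilde S=\tilde S^*\tilde P$ together with $\tilde P h\in\cH$; one checks $\la S h,k\ra=\la\tilde S^*\tilde P h,k\ra=\la\tilde P h,\tilde S k\ra=\la P h, S k\ra=\la S^*P h,k\ra$, where the middle equality uses invariance of $\cH$ under $\tilde S$.

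For the harder direction $(2)\Rightarrow(1)$, I would construct an explicit $\Gamma$-unitary dilation of $(S,P)$. Since $P$ is an isometry, take its minimal unitary dilation $\tilde P$ on a space $\cK\supseteq\cH$ (the classical Schäffer/Sz.-Nagy dilation), so that $\cH$ is invariant under $\tilde P$ and $\tilde P|_{\cH}=P$. The task is then to build a normal operator $\tilde S$ on $\cK$ commuting with $\tilde P$, satisfying $\tilde S=\tilde S^*\tilde P$ and $r(\tilde S)\le 2$, and with $\tilde S|_{\cH}=S$. The natural guess, given the identity $S=S^*P$, is to mimic the known $\Gamma$-isometry model: a $\Gamma$-isometry is unitarily equivalent to $(M_\varphi, M_z)$ on $H^2(\cD)$ for suitable symbol $\varphi(z)=A^*+Az$ with $A$ acting on the defect space and $\|A\|\le 1$ plus a commutation-type constraint; then on $L^2(\cD)$ the pair $(M_\varphi, M_z)$ is a $\Gamma$-unitary dilating it. So I would first reduce to showing that any $(S,P)$ satisfying (2) is unitarily equivalent to such a model—or, more directly, appeal to the already-established structure theory of $\Gamma$-isometries (Wold decomposition into a $\Gamma$-unitary part plus a pure part modeled on $H^2$), which is exactly Theorem 2.6 of \cite{AY} cited in the excerpt.

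Concretely, I would argue: given (2), write $P=U\oplus W$ by the Wold decomposition of the isometry $P$, with $U$ unitary on $\cH_u$ and $W$ a shift on $\cH_s$; the relation $S=S^*P$ forces $S$ to respect this decomposition (a pure-isometry commutation argument as in the proof of Theorem \ref{quasi}: $S_{us}, S_{su}$ vanish because $W^{*n}\to 0$ strongly), giving $S=S_u\oplus S_s$. On $\cH_u$: $S_u=S_u^*U$ with $U$ unitary and $\|S_u\|\le 2$, so $(S_u,U)$ is a $\Gamma$-unitary by Theorem \ref{Gamma-unitary}(2) provided $S_u$ is normal—which follows since $S_u=S_u^*U$ with $U$ unitary implies $S_uS_u^*=S_u^*UU^*S_u=S_u^*S_u$. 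On $\cH_s$: $W\cong M_z$ on $H^2(\cD)$, and the relation $S_s=S_s^*W$ together with $\|S_s\|\le 2$ and commutativity identifies $S_s$ with a multiplication operator $M_\varphi$, $\varphi(z)=A^*+Az$; this is the standard computation that the Fourier coefficients of the symbol beyond degree one vanish. Then $(S_s,W)$ extends to $(M_\varphi, M_z)$ on $L^2(\cD)$, a $\Gamma$-unitary. Taking the direct sum of the two $\Gamma$-unitary dilations gives the desired $(\tilde S,\tilde P)$ dilating $(S,P)$, completing the proof.

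The main obstacle I anticipate is the $\cH_s$ step: carefully justifying that the relation $S_s=S_s^*W$ on $H^2(\cD)$ forces $S_s$ to be a multiplication operator with a linear-in-$z$ symbol of the required form, and that $\|S_s\|\le 2$ translates into the norm constraint $\|A\|\le 1$ that makes $(M_\varphi,M_z)$ on $L^2$ a genuine $\Gamma$-unitary (one must check the spectral condition $s=\bar s p$, $|p|=1$, $|s|\le 2$ on the torus). This is essentially the content of the $\Gamma$-isometry model theorem of Agler–Young, so if one is willing to cite \cite{AY-model} or \cite{TPS-Adv} the argument shortens considerably; otherwise the Fourier-coefficient computation must be done by hand.
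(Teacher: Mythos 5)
The paper does not actually prove Theorem \ref{Gamma-isomtery}: it is quoted as a known characterization and attributed to \cite{AY-model} and \cite{TPS-Adv}, so any proof you supply is necessarily "different" from the paper's. Your route is the standard Agler--Young one and is essentially sound: the direction $(1)\Rightarrow(2)$ via inner products of vectors in the invariant subspace is correct (note that the adjoint of a restriction is the compression of the adjoint, which your computation $\la Sh,k\ra=\la Ph,Sk\ra=\la S^*Ph,k\ra$ handles properly); and for $(2)\Rightarrow(1)$ the Wold decomposition of $P$, the observation that $S_u=S_u^*U$ with $U$ unitary forces $S_u$ normal so that Theorem \ref{Gamma-unitary}(2) applies (with $r(S_u)=\|S_u\|\le 2$), and the Fourier-coefficient argument identifying $S_s$ with $M_{F+zF^*}$ and extending to the normal pair $(M_{F+zF^*},M_z)$ on $L^2(\cE)$, all go through.

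The one genuine gap is in the block-diagonality step. You assert that both off-diagonal blocks $S_{us}$ and $S_{su}$ vanish "because $W^{*n}\to 0$ strongly," mirroring the argument in the paper's Theorem \ref{quasi}. The intertwining relations coming from $SP=PS$ are $S_{su}U=WS_{su}$ and $S_{us}W=US_{us}$. The first does kill $S_{su}$: it gives $S_{su}^*=U^nS_{su}^*W^{*n}$, whence $\|S_{su}^*x\|=\|S_{su}^*W^{*n}x\|\to 0$. But the second relation alone does \emph{not} force $S_{us}=0$: the inclusion $H^2\hookrightarrow L^2(\bT)$ intertwines the unilateral shift with the bilateral shift in exactly this way and is nonzero. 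To close the gap you must use the hypothesis $S=S^*P$: it gives $S^*=P^*S$, so $S^*\cH_u=P^*S\cH_u\subseteq P^*\cH_u\subseteq\cH_u$ (since $\cH_u=\bigcap_nP^n\cH$ is invariant under $S$ and reducing for $P$), i.e.\ $\cH_u$ is reducing for $S$ and hence $S_{us}=0$. With that repair, and granting the (citable) Fourier computation on the shift part, the proof is complete.
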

	
	The algebraic characterization of pentablock isometries is as follows:
	\begin{thm}\label{P-isometry}
		Let $(V_1, V_2, V_3)$ be a commuting triple of bounded operators on a Hilbert space $\cH$. Then the following are equivalents: 
		\begin{enumerate}
			\item $(V_1, V_2, V_3)$ is a $\cP$-isometry;
			\item $(V_2, V_3)$ is a $\Gamma$-isometry and $V_1^*V_1=I-\frac{1}{4}V_2^*V_2$.
		\end{enumerate}
	\end{thm}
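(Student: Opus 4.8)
The implication $(1)\Rightarrow(2)$ is exactly Lemma~\ref{in}, so the whole content is in $(2)\Rightarrow(1)$: given a commuting triple with $(V_2,V_3)$ a $\Gamma$-isometry and $V_1^*V_1=I-\tfrac14 V_2^*V_2$, I must produce a $\cP$-unitary extension on a larger space. The plan is to build the extension in two stages, mimicking the Wold-type decomposition already established. First I would apply the Wold-type decomposition for $\Gamma$-isometries to $(V_2,V_3)$, splitting $\cH=\cH_1\oplus\cH_2$ into the $\Gamma$-unitary part and the pure $\Gamma$-isometry part; arguing as in the proof of Theorem~\ref{quasi} (using that $V_1$ commutes with $V_1^*V_1$, hence with $V_2,V_3$ and, by normality on the unitary piece, with $V_2^*,V_3^*$ there, together with the strong convergence $W_3^{*n}\to0$ on the pure part), I would show $V_1$ is block-diagonal with respect to this splitting. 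On $\cH_1$ the restricted triple is a quasi $\cP$-unitary by Corollary~\ref{quasi cor}, and Theorem~\ref{quasi} already realizes it inside a $\cP$-unitary (it is a direct summand of a $\cP$-unitary $\oplus$ the restriction of a $\cP$-unitary on $L^2\otimes\cF$ to $H^2\otimes\cF$), so that piece dilates to a $\cP$-unitary. Thus it remains to handle the case where $V_3$ is a \emph{pure} isometry, i.e.\ to dilate a pure $\cP$-isometry to a $\cP$-unitary.

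\medskip
\noindent For the pure case I would use the known model of a pure $\Gamma$-isometry: $V_3$ is unitarily equivalent to $M_z\otimes I$ on $H^2(\cE)$ for some Hilbert space $\cE$, and then $V_2$ becomes $M_{\varphi}$ for an operator-valued multiplier $\varphi(z)=F+F^*z$ with $F\in\cB(\cE)$, $\|\varphi\|_\infty\le 2$ and the $\Gamma$-isometry relations $S=S^*P$ forcing the fundamental-operator-type structure. The relation $V_1^*V_1=I-\tfrac14 V_2^*V_2=I-\tfrac14 M_\varphi^*M_\varphi$ then pins down $V_1^*V_1$ as a multiplication operator by a bounded positive analytic-symbol Toeplitz object; the crucial point is to show $V_1$ itself is (unitarily equivalent to) a multiplication operator $M_\psi$ on $H^2(\cE)$ with a symbol $\psi$ of the form $G+G_0 z + \dots$ controlled by the commutation with $M_z$. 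Concretely, $V_1$ commutes with $V_3\cong M_z\otimes I$, and an operator on $H^2(\cE)$ commuting with the pure shift $M_z\otimes I$ need not be analytic, but combined with $V_1^*V_1$ being the Toeplitz operator with the analytic-type symbol $I-\tfrac14\varphi^*\varphi$ and the fact that $V_1V_1^*V_1=V_1^*V_1^2$, I expect to be able to conclude $V_1=M_\psi$ for a bounded analytic $\psi$. Granting that, I would then exhibit the $\cP$-unitary: pass from $H^2(\cE)$ to $L^2(\cE)$, replacing $M_z\otimes I$ by the bilateral shift and $M_\psi,M_\varphi$ by the corresponding multiplication operators with the same symbols on $L^2(\cE)$; by Theorem~\ref{P-unitary}(2) this triple $(\tilde M_\psi,\tilde M_\varphi,\tilde M_z)$ is a $\cP$-unitary provided the pointwise relations $\psi(e^{it})^*\psi(e^{it})=I-\tfrac14\varphi(e^{it})^*\varphi(e^{it})$ and the $\Gamma$-unitarity of $(\varphi,z)$ on $\bT$ hold, which follow from the $H^\infty$ identities by taking boundary values. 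Restricting back to $H^2(\cE)$ recovers $(V_1,V_2,V_3)$, so the pure $\cP$-isometry sits inside a $\cP$-unitary.

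\medskip
\noindent Finally I would assemble the two pieces: the direct sum of the $\cP$-unitary extension on $\cH_1$ and the $\cP$-unitary extension of the pure part on $\cH_2$ is a $\cP$-unitary on a space containing $\cH_1\oplus\cH_2=\cH$ as a joint invariant subspace, and its compression to $\cH$ is $(V_1,V_2,V_3)$; hence $(V_1,V_2,V_3)$ is a $\cP$-isometry.

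\medskip
\noindent \emph{Main obstacle.} The delicate step is the structural claim in the pure case that $V_1$ must be a \emph{bounded analytic} multiplication operator $M_\psi$ with $\psi(e^{it})^*\psi(e^{it})=I-\tfrac14\varphi(e^{it})^*\varphi(e^{it})$ a.e.\ on $\bT$. The identity $V_1^*V_1=I-\tfrac14 M_\varphi^*M_\varphi$ gives a Toeplitz-type operator on the left, and one must leverage the commutation $V_1M_z=M_zV_1$ together with $V_1V_1^*V_1=V_1^{*}V_1^2$ (derived from $V_1^*V_1=I-\tfrac14V_2^*V_2$ and commutativity) to force analyticity of $V_1$ and then read off the boundary relation; handling the possible non-injectivity of $V_1$ on this piece and the interplay with the multiplier structure of the $\Gamma$-isometry $(V_2,V_3)$ is where the real work lies. (An alternative route, avoiding this, is to absorb $V_1$'s polar-decomposition structure as in Theorem~\ref{quasi}: write $V_1=\Theta|V_1|$, show $|V_1|$ and the partial isometry $\Theta$ commute with $V_2,V_3$, decompose $\Theta$ by its Wold decomposition, and reduce to the case $\Theta$ is a shift or unitary — but one must still check the resulting normal triple has spectrum in $b\cP$, which again is the heart of the matter.)
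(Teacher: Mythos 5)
Your strategy (Wold-decompose $(V_2,V_3)$, treat the quasi $\cP$-unitary and pure pieces separately, then reassemble) is genuinely different from the paper's argument, but as written it has two real gaps. The first is the block-diagonalization of $V_1$ with respect to $\cH=\cH_1\oplus\cH_2$. Writing $V_1=\sbm{V_{11}&V_{12}\\ V_{21}&V_{22}}$, commutativity gives $V_{21}R_3=W_3V_{21}$ and $V_{12}W_3=R_3V_{12}$. The first relation does force $V_{21}=0$, since $\|V_{21}^{*}y\|=\|R_3^{n}V_{21}^{*}W_3^{*n}y\|\le\|V_{21}\|\,\|W_3^{*n}y\|\to0$. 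But the second says that $V_{12}$ intertwines a pure isometry with a unitary, and such intertwiners need not vanish: the inclusion $H^{2}\hookrightarrow L^{2}(\bT)$ intertwines the unilateral shift with the bilateral shift. So commutativity plus $W_3^{*n}\to0$ does not kill $V_{12}$, and you cannot simply cite the decomposition "already established" --- Theorem 4.4 presupposes that the triple is a $\cP$-isometry (which would be circular here), and to make this step work you would have to bring the hypothesis $V_1^{*}V_1=I-\frac14V_2^{*}V_2$ into the argument in a way you have not indicated. The second gap is in the pure case: Theorem \ref{P-unitary}(2) requires the first component to be \emph{normal}, i.e.\ $\psi(e^{it})$ normal a.e., which nothing in your construction provides; the extension $(\tilde M_{\psi},\tilde M_{\varphi},\tilde M_{z})$ to $L^{2}(\cE)$ is in general only a quasi $\cP$-unitary. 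This one is repairable: Corollary \ref{quasi cor} applies without any normality, once you extract the pointwise identity $\psi^{*}\psi=I-\frac14\varphi^{*}\varphi$ on $\bT$ from the Toeplitz identity $M_{\psi}^{*}M_{\psi}=I-\frac14M_{\varphi}^{*}M_{\varphi}$. Incidentally, the step you flag as the main obstacle --- analyticity of $V_1$ --- is not one: the commutant of $M_{z}\otimes I_{\cE}$ on $H^{2}\otimes\cE$ is exactly $\{M_{\psi}:\psi\in H^{\infty}(\cB(\cE))\}$, a standard fact the paper itself invokes in Section \ref{Pure-Isometry}.

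The paper's proof takes a shorter route that bypasses both issues. It passes to the \emph{minimal} $\Gamma$-unitary extension $(\tilde V_2,\tilde V_3)$ of $(V_2,V_3)$ on $\tilde\cH=\overline{\operatorname{span}}\{\tilde V_2^{*n}\tilde V_3^{*m}h\}$ and defines $\tilde V_1(\tilde V_2^{*n}\tilde V_3^{*m}h)=\tilde V_2^{*n}\tilde V_3^{*m}V_1h$; the hypothesis $V_1^{*}V_1=I-\frac14V_2^{*}V_2$ enters through a single norm computation giving $\|\tilde V_1u\|^{2}=\|u\|^{2}-\frac14\|\tilde V_2u\|^{2}$ on finite sums, which yields well-definedness, boundedness, the commutation relations, and the defect identity for $\tilde V_1$ all at once; Corollary \ref{quasi cor} then shows $(\tilde V_1,\tilde V_2,\tilde V_3)$ is a quasi $\cP$-unitary, and restricting to $\cH$ finishes. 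If you wish to salvage your decomposition-based approach, you must either prove $V_{12}=0$ using the defect identity or restructure the argument so that only the valid vanishing of $V_{21}$ is needed, and replace the appeal to Theorem \ref{P-unitary}(2) by Corollary \ref{quasi cor}.
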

	\begin{proof}
		The $(1) \Rightarrow (2)$ is Lemma \ref{in}. We only need to show that $(2) \Rightarrow (1).$ 
		
		%Since $(V_{1},V_{2},V_{3})$ is a $\cP-$ isometry on $\cH,$ there exists a $\cP-$unitary $(N_{1},N_{2},N_{3})$ on $\cK \supseteq \cH$ such that $\cH$ is a joint invariant subspace for each $N_{i}$ and $N_{i}|_{\cH} = V_{i}.$ It follows that $(N_{2},N_{3})$ is a $\Gamma-$unitary. Hence $(V_{2},V_{3})$ is a $\Gamma-$isometry. The equality $N_{1}^{*} N_{1} = I - \frac{1}{4} N_{2}^{*} N_{2}$ will give us $V_{1}^{*}V_{1} = I - \frac{1}{4} V_{2}V_{2}^{*}.$
		
		Since $(V_{2}, V_{3})$ is a $\Gamma-$isometry, there exists a Hilbert space $\tilde{\cH} \supseteq \cH$ and a $\Gamma-$unitary $(\tilde{V_{2}}, \tilde{V_{3}})$ on $\tilde{\cH}$ such that 
		$\tilde{V_{i}}|_{\cH} = V_{i}$ for $i=2,3$ where 
		$$\tilde{\cH} = \overline{\rm span} \{ \tilde{V_{2}}^{*n} \tilde{V_{3}}^{*m} h: n,m \geq 0 \text{ and } h\in\cH\}.$$
		That is, $(\tilde{V_{2}}, \tilde{V_{3}})$ is the minimal $\Gamma-$unitary extension of the $\Gamma-$isometry $(V_{2}, V_{3}).$ Now we extend the triple $(V_{1},V_{2},V_{3})$ to a quasi $\cP-$unitary. To do that, we define $\tilde{V_{1}}$ on $\tilde{\cH}$ such that 
		\begin{enumerate}
			\item[(a)] $\tilde{V_{1}}$ is an extension of $V_{1},$ i.e., $\tilde{V_{1}}|_{\cH} = V_{1},$
			\item[(b)] $\tilde{V_{1}}$ commutes with $\tilde{V_{2}}$ and $\tilde{V_{3}},$ and
			\item[(c)] $\tilde{V_{1}}^{*} \tilde{V_{1}} = I_{\tilde{\cH}} - \frac{1}{4} \tilde{V_{2}}^{*} \tilde{V_{2}}.$
		\end{enumerate}
		Define $$\tilde{V_{1}} (\tilde{V_{2}}^{*n} \tilde{V_{3}}^{*m} h) = \tilde{V_{2}}^{*n} \tilde{V_{3}}^{*m} V_{1}h.$$ Let $\sum\limits_{n,m} \tilde{V_{2}}^{*n} \tilde{V_{3}}^{*m} V_{1} h_{n,m}$ be a finite sum. Note that 
		\begin{align*}
			\left\| \sum\limits_{n,m} \tilde{V_{2}}^{*n} \tilde{V_{3}}^{*m} V_{1} h_{n,m} \right\|^{2} & = \sum\limits_{n_{1},m_{1}, n_{2}, m_{2}} \la  \tilde{V_{2}}^{*n_{1}} \tilde{V_{3}}^{*m_{1}} V_{1} h_{n_{1},m_{1}},  \tilde{V_{2}}^{*n_{2}} \tilde{V_{3}}^{*m_{2}} V_{1} h_{n_{2},m_{2}} \ra \\
			= &  \sum\limits_{n_{1},m_{1}, n_{2}, m_{2}} \la  \tilde{V_{2}}^{n_{2}} \tilde{V_{3}}^{m_{2}} V_{1} h_{n_{1},m_{1}},  \tilde{V_{2}}^{n_{1}} \tilde{V_{3}}^{m_{1}} V_{1} h_{n_{2},m_{2}} \ra \\
			= & \sum\limits_{n_{1},m_{1}, n_{2}, m_{2}} \la V_{2}^{n_{2}} V_{3}^{m_{2}} V_{1} h_{n_{1},m_{1}}, V_{2}^{n_{1}} V_{3}^{m_{1}} V_{1} h_{n_{2}, m_{2}} \ra \\
			= & \sum\limits_{n_{1},m_{1}, n_{2}, m_{2}} \la V_{1}^{*} V_{1} V_{2}^{n_{2}} V_{3}^{m_{2}} h_{n_{1},m_{1}}, V_{2}^{n_{1}} V_{3}^{m_{1}} h_{n_{2}, m_{2}} \ra \\
			= & \sum\limits_{n_{1},m_{1}, n_{2}, m_{2}} \la V_{2}^{n_{2}} V_{3}^{m_{2}} h_{n_{1},m_{1}}, V_{2}^{n_{1}} V_{3}^{m_{1}} h_{n_{2}, m_{2}} \ra  \\
			& -  \frac{1}{4} \sum\limits_{n_{1},m_{1}, n_{2}, m_{2}} \la V_{2}^{*} V_{2} V_{2}^{n_{2}} V_{3}^{m_{2}} h_{n_{1},m_{1}}, V_{2}^{n_{1}} V_{3}^{m_{1}} h_{n_{2}, m_{2}} \ra \\
			= & \left\| \sum\limits_{n,m} \tilde{V_{2}}^{*n} \tilde{V_{3}}^{*m} h_{n,m} \right\|^{2} -\frac{1}{4} \left\| \tilde{V_{2}} \sum\limits_{n,m} \tilde{V_{2}}^{*n} \tilde{V_{3}}^{*m} h_{n,m} \right\|^{2}.
		\end{align*}
		This proves that $\tilde{V_{1}}$ is well-defined on $\tilde{\cH}$ and satisfies (a), (b) and (c). By Corollary \ref{quasi cor}, the triple $(\tilde{V_{1}}, \tilde{V_{2}}, \tilde{V_{3}})$ is a quasi $\cP-$unitary. Hence, we are done. 
	\end{proof}	
	\section{Pure pentablock isometries}\label{Pure-Isometry}
	It is well known that any pure isometry is unitarily equivalent to the shift operator on the vector-valued Hardy space $H^2\otimes \mathcal E$. We say that a  $\cP-$isometry  $(V_1, V_2, V_3)$ is a pure $\cP-$isometry if $V_3$ is pure. This section will give a model for certain pure $\cP-$isometries. Let us start with a simple example. The pair $(I, 0, M_z)$ on the Hardy space is a pure $\cP-$isometry. The following example gives a non-trivial example of a pure $\cP-$isometry.
	\begin{example}\label{Ex1}
		Let $\left(M_{z_1}, M_{z_2}\right)$ be the pair of multiplication operators on the Hardy space of the bidisc. Then, the triple $\left(\frac{M_{z_1}-M_{z_2}}{2}, M_{z_1}+M_{z_2}, M_{z_1}M_{z_2}\right)$ on $H^2(\mathbb{D}^2)$ is a pure $\cP-$ isometry. 
		
		Indeed, the pair $\left(M_{z_1}+M_{z_2}, M_{z_1}M_{z_2}\right)$ is the symmetrization of the pair $\left(M_{z_1}, M_{z_2}\right)$. Hence, it is a $\Gamma-$isometry. Since the operator $M_{z_1}M_{z_2}$ is a product of two pure isometries, it follows that $M_{z_1}M_{z_2}$ is a pure isometry. Now we shall show that the triple $\left(\frac{M_{z_1}-M_{z_2}}{2}, M_{z_1}+M_{z_2}, M_{z_1}M_{z_2}\right)$ satisfies the following relation: 
		$$\frac{(M_{z_1}-M_{z_2})^*(M_{z_1}-M_{z_2})}{4}= I- \frac{(M_{z_1}+M_{z_2})^*(M_{z_1}+M_{z_2})}{4}.$$
		To that end, consider
		\begin{align*}
			\frac{(M_{z_1}-M_{z_2})^*(M_{z_1}-M_{z_2})}{4}&= \frac{M_{z_1}^*M_{z_1}- M_{z_1}^*M_{z_2}-M_{z_2}^*M_{z_1}+ M_{z_2}^*M_{z_2}}{4}	\\
			&= \frac{2I- M_{z_1}^*M_{z_2}-M_{z_2}^*M_{z_1}}{4}
		\end{align*} and 
		
		\begin{align*}
			I- \frac{(M_{z_1}+M_{z_2})^*(M_{z_1}+M_{z_2})}{4}&= I- \frac{M_{z_1}^*M_{z_1}+M_{z_1}^*M_{z_2}+ M_{z_2}^*M_{z_1}+M_{z_2}^*M_{z_2}}{4}\\
			&=\frac{4I-\left(I+M_{z_1}^*M_{z_2}+ M_{z_2}^*M_{z_1}+I\right)}{4}\\
			&= \frac{2I- M_{z_1}^*M_{z_2}-M_{z_2}^*M_{z_1}}{4}.
		\end{align*}
		Therefore by Theorem \ref{P-isometry} and the definition of pure $\cP-$isometries, the triple $\left(\frac{M_{z_1}-M_{z_2}}{2}, M_{z_1}+M_{z_2}, M_{z_1}M_{z_2}\right)$ is a pure $\cP-$isometry
	\end{example}
	Below, we shall give an example of $\cP-$ isometry, which is not pure. 
	\begin{example}
		Let $\left(M_{z_1}, M_{z_2}\right)$ be the pair of multiplication operators on $H^2(\mathbb{D}^2)$ and $(U_1, U_2)$ be a pair or commuting unitaries on some non-zero Hilbert space $\cH$. Then, by the same calculation as in Example \ref{Ex1}, we can check that the triple
		\begin{equation}\label{Ex22}
			\left(\frac{M_{z_1}\oplus U_1-M_{z_2}\oplus U_2}{2}, M_{z_1}\oplus U_1+M_{z_2}\oplus U_2, M_{z_1}M_{z_2}\oplus U_1U_2\right)
		\end{equation}
		on $H^2(\mathbb{D}^2)\oplus\cH$ is a $\cP-$isomtery . But it is not pure because  $M_{z_1}M_{z_2}\oplus U_1U_2$ is not pure. Hence, the triple \eqref{Ex22} is a $\cP-$isometry, which is not pure $\cP-$isometry.
	\end{example}
	\subsection{Discussion on Pure Pentablock-Isometries}
	Note that if $(V_1, V_2, V_3)$ is a pure $\cP$-isometry, then $(V_2, V_3)$ is a pure $\Gamma-$ isomtery. Therefore, by Theorem 2.4 of \cite{AY-model} (see also \cite{TPS-Adv}), the pair $(V_2, V_3)$ is unitarily equivalent to $(M_{F+zF^*}, M_{z})$ on $H^2(\cD_{V_3^*})$ where $F$ is a numerical contraction and $\cD_{V_3^*}$ is the defect operator. Since $V_1$ commutes with $V_3$, there exists a $\cB(\cD_{V_{3}^{*}})-$valued bounded analytic function $\varphi$ on $\bD$ such that $V_{1}$ is unitaraliy equivalent to $M_{\varphi}$ on $H^2(\cD_{V_3^*})$. Hence $(V_1, V_2, V_3)$ is unitarily equivalent to $(M_{\varphi}, M_{F+zF^*}, M_{z})$ on $H^2(\cD_{V_3^*})$. 
	
	Let $\cE$ be a Hilbert space and $F$ be a numerical contraction on $\cE.$ Then $(M_{F + z F^{*}}, M_{z})$ on $H^{2}(\cE)$ is a pure $\Gamma-$isometry. Since $I - \frac{1}{4}M_{F + z F^{*}}^{*}M_{F + z F^{*}} \geq 0,$ we have 
	$$ I_{\cE} - (F + z F^{*})^{*}(F + zF^{*}) \geq 0 \quad \text{for all $z \in \mathbb{T}$}.  $$
	By operator Fejér-Riesz theorem \cite{Fejer}, there exist two operators $A_{0}, A_{1}$ on $\cE$ such that 
	$$ I_{\cE} - \frac{1}{4}M_{F + z F^{*}}^{*}M_{F + z F^{*}} = (A_{0} + z A_{1})^{*} (A_{0} + z A_{1})  \quad \text{for all $z \in \mathbb{T}$} .$$
	If in addition $A_{0}$ and $A_{1}$ satisfy
	$$(F + z F^{*}) (A_{0} + z A_{1}) = (A_{0} + z A_{1}) (F + zF^{*}) \quad \text{for all $z \in \mathbb{T}$}, $$
	then $(M_{A_{0} + z A_{1}}, M_{F + z F^{*}}, M_{z})$ on $H^{2}(\cE)$ is a pure $\cP-$isometry. To find such $A_{0}$ and $A_{1},$ we need to solve the following operator equations
	\begin{align*}
		A_{0}^{*} A_{0} + A_{1}^{*} A_{1} & = I - \frac{F^{*}F + F F^{*}}{4}\\
		A_{0}^{*}A_{1} & = - \frac{F^{*2}}{4}\\
		F A_{0} &  = A_{0} F \\
		F A_{1} + F^{*} A_{0} & = A_{0}F^{*} + A_{1} F\\
		F^{*} A_{1} &  = A_{1} F^{*}.
	\end{align*}
	In general, we don't know whether the above system of equations is solvable. Note that the operator Fejér-Riesz theorem only guarantees the existence of $A_{0}$ and $A_{1}$, satisfying the first two equations. We can solve these equations and find $A_{0}, A_{1}$ explicitly when $F$ is a normal operator. 
	\begin{thm}
		Let $\cE$ be a Hilbert space and $F$ be a numerical contraction on $\cE.$ If $F$ is a normal operator and $D_{F} = (I - F^{*}F)^{1/2}$, then $ A_{0} = \frac{1}{2}(I + D_{F})$ and $ A_{1} = - \frac{1}{2}(I + D_{F})^{-1} F^{*2}$ satisfy all the five equations above. That is, $(M_{A_{0} + zA_{1}}, M_{F + z F^{*}}, M_{z})$ is a pure $\cP-$isometry. 
	\end{thm}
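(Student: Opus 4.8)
The plan is to verify the five operator equations directly by substituting $A_0 = \tfrac12(I+D_F)$ and $A_1 = -\tfrac12(I+D_F)^{-1}F^{*2}$, using throughout that $F$ is normal, hence $F$, $F^*$, $D_F = (I-F^*F)^{1/2}$, and $(I+D_F)^{-1}$ all lie in the commutative $C^*$-algebra generated by $F$ and so commute with each other. Note first that since $F$ is a numerical contraction, $\|F\| \le 1$, so $I-F^*F \ge 0$ and $D_F$ is well-defined and positive; moreover $I+D_F \ge I$ is invertible, so $A_1$ makes sense. The equations $FA_0 = A_0 F$ and $F^*A_1 = A_1 F^*$ are then immediate from commutativity. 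For the fourth equation $FA_1 + F^*A_0 = A_0 F^* + A_1 F$, rewrite it as $F^*A_0 - A_0 F^* = A_1 F - F A_1$; the left side vanishes by normality of $F$ (so $F^*$ commutes with $A_0$), and the right side vanishes because $F$ commutes with $A_1$. So three of the five equations are essentially free.

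The substance is in the first two equations. For the second, compute $A_0^* A_1 = \tfrac12(I+D_F)\cdot\bigl(-\tfrac12(I+D_F)^{-1}F^{*2}\bigr) = -\tfrac14 F^{*2}$, where I used that $A_0 = A_0^*$ since $D_F$ is self-adjoint, and that $(I+D_F)$ commutes with $(I+D_F)^{-1}$ trivially; this gives the required $-\tfrac14 F^{*2}$ on the nose. For the first equation, I compute
\begin{align*}
A_0^*A_0 + A_1^*A_1 &= \tfrac14(I+D_F)^2 + \tfrac14 F^{2}(I+D_F)^{-2}F^{*2}.
\end{align*}
Because $F$ is normal, $F^2$, $F^{*2}$ and $(I+D_F)^{-2}$ all commute, so the second term equals $\tfrac14 (F F^*)^2 (I+D_F)^{-2} = \tfrac14 (I-D_F^2)^2(I+D_F)^{-2}$, using $FF^* = F^*F = I-D_F^2$. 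Now $(I-D_F^2)^2(I+D_F)^{-2} = \bigl((I-D_F)(I+D_F)\bigr)^2(I+D_F)^{-2} = (I-D_F)^2$, so the sum becomes $\tfrac14\bigl[(I+D_F)^2 + (I-D_F)^2\bigr] = \tfrac14\bigl[2I + 2D_F^2\bigr] = \tfrac12(I + D_F^2) = \tfrac12(I + I - F^*F) = I - \tfrac12 F^*F$. Finally, since $F$ is normal, $\tfrac14(F^*F + FF^*) = \tfrac12 F^*F$, so the right-hand side of the first equation is also $I - \tfrac12 F^*F$, and the two agree.

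Having verified all five equations, I invoke the discussion preceding the theorem: the first two equations give $I_{\cE} - \tfrac14 M_{F+zF^*}^* M_{F+zF^*} = (A_0+zA_1)^*(A_0+zA_1)$ on $\mathbb T$, equations three through five give $(F+zF^*)(A_0+zA_1) = (A_0+zA_1)(F+zF^*)$ on $\mathbb T$ (expand both sides in powers of $z$ and match the $z^0$, $z^1$, $z^2$ coefficients — these are exactly the three commutator equations), and together these imply, via Theorem \ref{P-isometry} and the definition of pure $\cP$-isometry, that $(M_{A_0+zA_1}, M_{F+zF^*}, M_z)$ on $H^2(\cE)$ is a pure $\cP$-isometry. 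The only mild subtlety — and the one place to be careful — is the bookkeeping that the $z$-expansion of the commutativity relation $(F+zF^*)(A_0+zA_1)=(A_0+zA_1)(F+zF^*)$ produces precisely equations three, four, and five (the $z^1$ coefficient giving the mixed equation four), together with the remark that $M_{A_0+zA_1}$ genuinely commutes with $M_{F+zF^*}$ and $M_z$ as multiplication operators; beyond that the proof is the elementary normal-operator functional calculus computation above.
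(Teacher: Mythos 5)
Your proposal is correct and follows essentially the same route as the paper: a direct verification of all five equations using that $F$, $F^*$, $D_F$, and $(I+D_F)^{-1}$ commute by normality, with the identical telescoping computation $\tfrac14(I+D_F)^2+\tfrac14(I-D_F^2)^2(I+D_F)^{-2}=I-\tfrac14(F^*F+FF^*)$ for the first equation. One small imprecision: ``$F$ is a numerical contraction, so $\|F\|\le 1$'' is not valid on its own (numerical radius $\le 1$ only gives $\|F\|\le 2$ in general); you need normality here, which the paper invokes explicitly and which you of course have by hypothesis.
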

	\begin{proof}
		Since $F$ is a numerical contraction as well as a normal operator, $F$ is a contraction. So, $D_{F} = (I - F^{*}F)^{1/2}$ makes sense. Let $ A_{0} = \frac{1}{2}(I + D_{F})$ and $ A_{1} = - \frac{1}{2}(I + D_{F})^{-1} F^{*2}.$ Since $F$ is normal, all three operators, $D_{F}, A_{0}$ and $A_{1}$, commute with both $F$ and $F^{*}.$ This proves that $A_{0}$ and $A_{1}$ satisfy the last three equations. The second equation is straightforward. To check the first equation, consider 
		\begin{align*}
			A_{0}^{*}A_{0} + A_{1}^{*}A_{1} & = \frac{1}{4} (I + D_{F})^{2} + \frac{1}{4} F^{2} (I + D_{F})^{-2} F^{*2}\\
			& =  \frac{1}{4} (I + D_{F})^{2} + \frac{1}{4}F^{*2}F^{2} (I + D_{F})^{-2} \\
			& = \frac{1}{4} (I + D_{F})^{2} + \frac{1}{4} (I - D_{F}^{2})^{2} (I + D_{F})^{-2}\\
			& = \frac{1}{4} (I + D_{F}^{2} + 2D_{F}) + \frac{1}{4} (I + D_{F}^{2} - 2D_{F})\\
			& = I - \frac{F^{*}F + F F^{*}}{4}.
		\end{align*}
	\end{proof}

	\section{Beurling-Lax-Halmos Representations of pure $\mathcal P$-isometries}\label{BLH-Rep}
	This section will be focusing on characterizing joint invariant subspaces of pure pentablock isometries. 
	
	Let $(M_{\varphi_1}, M_{\varphi_2}, M_z)$ be a pure $\cP-$isometry on $H^{2} \otimes \cE$. A closed non-zero subspace $\cM$ of $H^2\otimes\cE$ is said to be a joint invariant subspace of $(M_{\varphi_1}, M_{\varphi_2}, M_z)$ if $\cM$ is invariant under $M_{\varphi_1}$, $M_{\varphi_2}$ and $M_z$. We denote by $H^{\infty}(\cE', \cE),$ the space of all holomorphic $\cB(\cE', \cE)-$valued bounded functions on the unit disc. A function $\Theta $ in $H^{\infty}(\cE', \cE)$ is said to be inner if $M_{\Theta}:H^{2} \otimes \cE'\rightarrow H^{2} \otimes \cE$ is an isometry.
	
	\begin{thm}\label{BLH}
		Let $(M_{\varphi_1}, M_{\varphi_2}, M_z)$ be a pure $\cP-$isometry on $H^{2} \otimes \cE$.	Let $\cM$ be a non-zero closed subspace of $H^2\otimes\mathcal{E}$. Then $M$ is a joint invariant subspace of $(M_{\varphi_1}, M_{\varphi_2}, M_z)$ if and only if  there exist a Hilbert space $\mathcal{E}'$ and multipliers $\psi_1, \psi_2$  in $H^\infty(\mathcal{E}', {\mathcal E}')$ and an inner function $\Theta$ in $H^{\infty}({\mathcal E}', {\mathcal E})$ such that $\mathcal{M}= M_{\Theta}(H^{2} \otimes {\mathcal E}')$ and
		$$ M_{\Theta} M_{\psi_j}= M_{\varphi_j} M_{\Theta}, \quad j =1,2$$ 
		with $(M_{\psi_1}, M_{\psi_2}, M_z)$ is a pure $\mathcal{P}$-isometry on $H^2\otimes\mathcal{E}'$.
	\end{thm}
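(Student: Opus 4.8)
The plan is to follow the classical Beurling--Lax--Halmos template but carry the extra commutation data through the argument. First I would prove the easy direction: given an inner $\Theta \in H^\infty(\cE)$ with $\cM = M_\Theta(H^2\otimes\cE)$ and multipliers $\psi_1,\psi_2$ satisfying $M_\Theta M_{\psi_j} = M_{\varphi_j}M_\Theta$, the subspace $\cM$ is obviously $M_z$-invariant, and applying $M_{\varphi_j}$ to an element $M_\Theta f$ gives $M_{\varphi_j}M_\Theta f = M_\Theta M_{\psi_j} f \in \cM$, so $\cM$ is invariant under all three operators; the hypothesis that $(M_{\psi_1},M_{\psi_2},M_z)$ is a pure $\cP$-isometry is then just recorded as part of the conclusion. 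For the converse, suppose $\cM$ is a non-zero closed joint invariant subspace of $(M_{\varphi_1},M_{\varphi_2},M_z)$. Since $\cM$ is in particular invariant under $M_z$, the classical vector-valued Beurling--Lax--Halmos theorem (\cite{NF}) furnishes a Hilbert space $\cE_*$ and an inner function $\Theta\in H^\infty(\cB(\cE_*,\cE))$ with $\cM = M_\Theta(H^2\otimes\cE_*)$; a dimension count on the wandering subspace shows we may take $\cE_* = \cE$ (or at worst identify $\cE_*$ with a subspace of $\cE$ and pad), so I will write $\Theta\in H^\infty(\cE)$.

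Next I would extract the multipliers $\psi_j$. The operator $M_\Theta\colon H^2\otimes\cE \to \cM$ is an isometry onto $\cM$, so $M_\Theta^* M_\Theta = I$. Define $\psi_j$-multiplication by $M_{\psi_j} := M_\Theta^* M_{\varphi_j} M_\Theta$ on $H^2\otimes\cE$. Because $\cM$ is $M_{\varphi_j}$-invariant, $M_{\varphi_j}M_\Theta$ maps into $\cM = \operatorname{ran}M_\Theta$, hence $M_\Theta M_\Theta^* M_{\varphi_j}M_\Theta = M_{\varphi_j}M_\Theta$, which is exactly the intertwining relation $M_\Theta M_{\psi_j} = M_{\varphi_j}M_\Theta$. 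The operator $M_{\psi_j}$ so defined commutes with $M_z$: indeed $M_\Theta$ intertwines the two copies of $M_z$ (it is an analytic Toeplitz-type multiplier), $M_{\varphi_j}$ commutes with $M_z$, and $M_\Theta^* M_z = M_z M_\Theta^*$ on $\operatorname{ran} M_\Theta$ together with $M_z \cM \subseteq \cM$ gives $M_{\psi_j}M_z = M_z M_{\psi_j}$. A bounded operator on $H^2\otimes\cE$ commuting with $M_z$ is an analytic multiplier $M_{\psi_j}$ with $\psi_j\in H^\infty(\cB(\cE))$; this is the standard characterization of the commutant of the shift. So far this produces $\psi_1,\psi_2\in H^\infty(\cE)$ with the required intertwining identities.

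It remains to check that $(M_{\psi_1},M_{\psi_2},M_z)$ is itself a pure $\cP$-isometry, and this is where I expect the real work to be. Commutativity of $M_{\psi_1}$ and $M_{\psi_2}$ follows by transporting the commutativity of $M_{\varphi_1},M_{\varphi_2}$ across the isometry $M_\Theta$: from $M_\Theta M_{\psi_1}M_{\psi_2} = M_{\varphi_1}M_\Theta M_{\psi_2} = M_{\varphi_1}M_{\varphi_2}M_\Theta = M_{\varphi_2}M_{\varphi_1}M_\Theta = M_\Theta M_{\psi_2}M_{\psi_1}$ and injectivity of $M_\Theta$. By Theorem~\ref{P-isometry} it then suffices to verify that $(M_{\psi_2},M_z)$ is a $\Gamma$-isometry and $M_{\psi_1}^*M_{\psi_1} = I - \tfrac14 M_{\psi_2}^*M_{\psi_2}$. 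For the $\Gamma$-isometry part I would use Theorem~\ref{Gamma-isomtery}: $M_z$ is an isometry, and one must show $M_{\psi_2} = M_{\psi_2}^* M_z$ and $\|M_{\psi_2}\|\le 2$; both transfer from the corresponding identities for $(M_{\varphi_2},M_z)$ using $M_\Theta M_{\psi_2} = M_{\varphi_2}M_\Theta$, $M_\Theta^* M_\Theta = I$, and the fact that $M_\Theta$ intertwines the shifts so $M_\Theta^* M_z^* = M_z^* M_\Theta^*$ holds on the relevant subspace. The defect identity is the delicate one: starting from $M_{\varphi_1}^*M_{\varphi_1} = I - \tfrac14 M_{\varphi_2}^*M_{\varphi_2}$, compress by $M_\Theta$ on both sides to get $M_\Theta^* M_{\varphi_1}^*M_{\varphi_1}M_\Theta = M_\Theta^*M_\Theta - \tfrac14 M_\Theta^* M_{\varphi_2}^*M_{\varphi_2}M_\Theta = I - \tfrac14 M_{\psi_2}^*M_{\psi_2}$, using $M_{\varphi_2}M_\Theta = M_\Theta M_{\psi_2}$ for the last term. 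The left side equals $(M_{\varphi_1}M_\Theta)^*(M_{\varphi_1}M_\Theta) = (M_\Theta M_{\psi_1})^*(M_\Theta M_{\psi_1}) = M_{\psi_1}^* M_\Theta^* M_\Theta M_{\psi_1} = M_{\psi_1}^* M_{\psi_1}$, again by the intertwining relation and $M_\Theta^*M_\Theta = I$. This yields exactly the required identity, and purity of $M_z$ is immediate since the third component is literally the shift on $H^2\otimes\cE$. The one genuine subtlety to watch is the identification of the wandering-subspace dimension so that $\Theta$ can be taken $\cB(\cE)$-valued rather than $\cB(\cE_*,\cE)$-valued; if $\dim\cE_* < \dim\cE$ one works with $\Theta$ as a partial-isometry-valued multiplier and the same computations go through verbatim, since all that is used is $M_\Theta^*M_\Theta = I_{H^2\otimes\cE_*}$ and $\operatorname{ran}M_\Theta = \cM$.
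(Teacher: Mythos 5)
Your proposal is correct and follows essentially the same route as the paper: the classical Beurling--Lax--Halmos theorem handles $M_z$-invariance, the multipliers $\psi_j$ are extracted from the inclusion $M_{\varphi_j}\cM\subseteq\cM$ via the intertwining $M_\Theta M_{\psi_j}=M_{\varphi_j}M_\Theta$, and commutativity together with the defect identity $M_{\psi_1}^*M_{\psi_1}=I-\tfrac14 M_{\psi_2}^*M_{\psi_2}$ is transported across the isometry $M_\Theta$ so that Theorem~\ref{P-isometry} applies. Your explicit check that $(M_{\psi_2},M_z)$ is a $\Gamma$-isometry and your remark about the wandering-subspace multiplicity (taking $\Theta$ to be $\cB(\cE_*,\cE)$-valued when necessary) are details the paper leaves implicit, but they do not alter the argument.
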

	\begin{proof}
		The converse part is easy to see. We shall prove the necessary part. Since $\cM$ is invariant under $M_z$, there exist a Hilbert space $\mathcal{E}'$ and  an inner multiplier $\Theta$ on $H^\infty (\mathcal{E}', \mathcal{E})$ such that 
		$$\mathcal M=M_{\Theta}\left(H^2\otimes\mathcal{E}'\right).$$
		For $j=1,2$, we also have that 
		$$M_{\varphi_j}M_{\Theta} H^2\otimes\mathcal{E}'\subset M_{\Theta}H^2\otimes\mathcal{E}'$$
		Thus there exist multipliers $\psi_1, \psi_2$ on $H^\infty(\mathcal{E}', \mathcal{E}')$ such that 
		\begin{align}\label{int} 
			M_{\Theta} M_{\psi_1}= M_{\varphi_1} M_{\Theta}\quad \text { and }\quad 	M_{\Theta} M_{\psi_2}= M_{\varphi_2} M_{\Theta}.
		\end{align}
		Now, we need to show that the triple $(M_{\psi_1}, M_{\psi_2}, M_z)$ is a pure $\mathcal{P}$-isometry. By using equation \eqref{int}, we have 
		\begin{align*}
			M_{\psi_1}M_{\psi_2}=M_{\Theta}^*M_{\varphi_1}M_{\Theta}M_{\psi_2}= M_{\Theta}^*M_{\varphi_1}M_{\varphi_2}M_{\Theta} \quad 
			and \\
			M_{\psi_2}M_{\psi_1}= M_{\Theta}^*M_{\varphi_2}M_{\Theta}M_{\psi_2}= M_{\Theta}^*M_{\varphi_2}M_{\varphi_1}M_{\Theta}.
		\end{align*} 
		Since $M_{\varphi_1}$	and $M_{\varphi_2}$ are commuting, it follows that $M_{\psi_1}M_{\psi_2}= M_{\psi_2}M_{\psi_2}$. Also note that $M_z$ commutes with $M_{\psi_j}$, thus $(M_{\psi_1}, M_{\psi_2}, M_z)$ is a commuting triple. Now we shall show that the triple $(M_{\psi_1}, M_{\psi_2}, M_z)$ is a $\mathcal{P}$-isometry. First note that
		\begin{align*}
			M_{\psi_1}^*M_{\psi_1}= \left(M_{\Theta}^*M_{\varphi_1}M_{\Theta}\right)^*M_{\psi_1}=M_{\Theta}^*M_{\varphi_1}^*M_{\Theta}M_{\psi_1}= M_{\Theta}^*M_{\varphi_1}^*M_{\varphi_1}M_{\Theta}.
		\end{align*}
		Since $(M_{\varphi_1}, M_{\varphi_2}, M_z)$ is a $\mathcal{P}$- isometry, by Theorem \ref{P-isometry}, we get 
		\begin{align*}
			M_{\psi_1}^*M_{\psi_1}=M_{\Theta}^*\left(I- \frac{M_{\varphi_2}^*M_{\varphi_2}}{4}\right)M_{\Theta}.
		\end{align*}
		Again, a simple calculation with the help of equation \eqref{int} will give that
		\begin{align*}
			M_{\psi_1}^*M_{\psi_1}=I- \frac{M_{\psi_2}^*M_{\psi_2}}{4}.	
		\end{align*}
		Since $M_z$ is pure, therefore, by Theorem \ref{P-isometry}, the triple $(M_{\psi_1}, M_{\psi_2}, M_z)$ is a pure $\mathcal{P}$-isometry.	
	\end{proof}	
	\begin{center}
		\textbf{Statements and Declarations}
	\end{center}

	\textbf{Funding:} The research works of the first author are supported by the Prime Minister Research Fellowship PM/MHRD-20-15227.03. This work was done when the second author was a PhD student at the Indian Institute of Science, Bangalore. Currently, the second author's research is partially supported by a PIMS postdoctoral fellowship.
	
	\vspace{0.1in} \noindent\textbf{Acknowledgement:}
	The authors thank Professor Tirthankar Bhattacharyya and Dr. Haripada Sau for their valuable discussions and suggestions. The second author also thanks Prof. Raul E. Curto for some discussion.
	
	The authors thank the referee for their helpful comments and suggestions that significantly improved the paper.


\begin{thebibliography}{99}
		\bibitem{AYW} A. A. Abouhajar, M. C. White and N. J. Young, {\em A Schwarz lemma for a domain related to $\mu$-synthesis}, J. Geom. Anal. 17 (2007), no. 4, 717–750.
		
		\bibitem{Agler-H} J. Agler, J. Harland and B. Raphael, {\em Classical function theory, operator dilation theory, and machine computation on multiply-connected domains}, Mem. Am. Math. Soc. 191(892) (2008), viii+159.
		
			\bibitem{AM} J. Agler and J. E. McCarthy, Distinguished Varieties, Acta Math. 194 (2005), no. 2, 133-153.
		
		\bibitem{ALY-JMAA} J. Agler, Z. A. Lykova and N. J. Young, {\em The complex geometry of a domain related to $\mu$-synthesis}, J. Math. Anal. Appl.  422 (2015), 508-543.
		
		
		\bibitem{AY-model}  J. Agler and N. J. Young, {\em A model theory for $\Gamma$-contractions}, J. Operator Theory 49 (2003), no. 1, 45–60.
		
		\bibitem{AY} J. Agler and N. J. Young, {\em A commutant lifting theorem for a domain in $\mathbb{C}^{2}$ and spectral interpolation,} J. Funct. Anal. 161 (1999), 452–477.
		
		\bibitem{BLMS} D. Alpay, T. Bhattacharyya, A. Jindal, and P. Kumar, {\em A dilation theoretic approach to approximation by inner functions,} Bull. Lond. Math. Soc. 55(2023), 6, 2840–2855. 
		
		\bibitem{AL-Penta} N. M. Alshehri and Z. A. Lykova, {\em A Schwarz lemma for the pentablock,} J. Geom. Anal. 33 (2023), no. 2, Paper No. 65, 44 pp.
		
		\bibitem{Ando} T. Andô, {\em On a Pair of Commuting Contractions}, Acta Sci. Math. (Szeged) 24 (1963), 88–90.
		
		\bibitem{Arveson} W. B. Arveson, {\em Subalgebras of $C^*$-algebra. II}, Acta Math. 128 (1972), 271–308.
		
		\bibitem{Ball-Sau} J. A. Ball and H. Sau, {\em Rational dilation of tetrablock contractions revisited}, J. Funct. Anal. 278 (2020), 108275.
		
		\bibitem{Ball-Sau2} J. A. Ball and H. Sau,	{\em Dilation theory and functional models for tetrablock contractions}, Complex Anal. Oper. Theory 17, 25 (2023).
		
		\bibitem{Ball-Sau3} J. A. Ball and H. Sau, {\em Dilation and Model Theory for Pairs of Commuting Contractions}, arXiv:2308.07589 [math.FA].
		
		\bibitem{Bhat} B. V. R. Bhat and T.  Bhattacharyya, {\em Dilations, Completely Positive Maps, and Geometry}, Trim Vol. 84, Hindustan Book Agency, 2023. 
		
		\bibitem{Tirtha-Tetrablock} T. Bhattacharyya, {\em The tetrablock as a spectral set}, Indiana Univ. Math. J. 63(2014), 1601-1629.
		
		\bibitem{BB} T. Bhattacharyya and M. Bhowmik, {\em A note on the dilation of a certain family of tetrablock contractions}, to appear in Operator and Matrix Theory, Function Spaces, and Applications: Proceedings of the International Workshop on Operator Theory and its Applications (IWOTA 2022).
		
			\bibitem{BKS} T. Bhattacharyya, P. Kumar and H. Sau, Distinguished varieties through the Berger–Coburn–Lebow theorem, Anal. PDE 15 (2022), no. 2, 477–506.
		
		\bibitem{TPS-Adv} T. Bhattacharyya, S. Pal and S. Shyam Roy, {\em Dilation of $\Gamma$-contractions by solving operator equations}, Adv. Math. 230(2012), no. 2, 557-606.
		
		\bibitem{sub} S. Biswas and S. Shyam Roy, {\em Functional models of  $\Gamma_n-$contractions and characterization of $\Gamma_n-$isometries}, J. Funct. Anal. 266 (2014), no. 10, 6224-6255. 
		
		\bibitem{Kumar} A. Jindal and P. Kumar, {\em Rational penta-inner functions and the distinguished boundary of the pentablock}, Complex Anal. Oper. Theory 16 (2022), no. 8, Paper No. 120, 12 pp.
		
		\bibitem{CHS}  R. Clouâtre, M. Hartz and D. Schillo, {\em A Beurling-Lax-Halmos theorem for spaces with a complete Nevanlinna-Pick factor}, Proc. Amer. Math. Soc. 148 (2020), no. 2, 731–740.
		
		\bibitem{Lem-Co} C. Costara, {\em The symmetrized bidisc and Lempert’s theorem},  Bull. Lond. Math. Soc. 36(5) (2004), 656–662. 
		
		\bibitem{Doyle} J. C. Doyle and A. Packard, {\em The complex structured singular value}, Automatica 29(1) (1993), 71–109.
		
			\bibitem{Curto} R. E. Curto, {\em Applications of several complex variables to multiparameter spectral theory, Surveys of some recent results in operator theory}, Vol. II, 25–90, Pitman Res. Notes Math. Ser., 192, Longman Sci. Tech., Harlow, (1988). 
		
		\bibitem{Curto2}R. E. Curto, I. S. Hwang,  W. Y. Lee, {\em The Beurling-Lax-Halmos theorem for infinite multiplicity},  J. Funct. Anal. 280 (2021), no. 6, Paper No. 108884.
		
		\bibitem{DJM} M. A. Dritschel, M. T. Jury, and S. McCullough, Dilations and constrained algebras, Oper. Matrices 10 (2016), no. 4, 829-861.
		
		\bibitem{Fejer} M. A. Dritschel and J. Rovnyak, {\em The operator Fejér-Riesz theorem,} A Glimpse at Hilbert Space Operators: Paul R. Halmos in Memoriam. Basel, Switzerland: Springer, 2010, pp.223-254. 
		
		\bibitem{DM} M. A. Dritschel and S. McCullough, {\em Failure of rational dilation on a triply connected domain}, J. Amer. Math. Soc. 18 (2005), 873–918.
		
		\bibitem{DU} M. A. Dritschel and B. Undrakh, Rational dilation problems associated with constrained algebras. J. Math. Anal. Appl. 467 (2018), no. 1, 95–131.
		
		\bibitem{Dull}	G. E. Dullerud and F.  Paganini, {\em A Course in Robust Control Theory: A Convex Approach}, Texts in Applied Mathematics 36. Springer, New York (2000).
		
		\bibitem{Lem-Tetra}  A. Edigarian, L. Kosiński and W. Zwonek,{\em The Lempert theorem and the tetrablock}, J. Geom. Anal. 23 (2013), no. 4, 1818–1831. 
		
		\bibitem{Kos-Penta} L. Kosinski, {\em  The group of automorphisms of the pentablock}, Complex Anal. Oper. Theory 9(6) (2015), 1349–1359.
		
		
		
		\bibitem{MT} S. McCullough and T. T. Trent, {\em Invariant subspaces and Nevanlinna-Pick kernels}, J. Funct. Anal. 178 (2000), no. 1, 226-249.
		
		\bibitem{NF} B. Sz.-Nagy, C. Foias, H. Bercovici and L. Kerchy, {\em Harmonic analysis of operators on Hilbert space}, Springer, 2010.
		
		
		\bibitem{Par} S. Parrott, {\em Unitary dilations for commuting contractions}, Pacific J. Math. 34 (1970), 481–490.
		
		\bibitem{Pal} S. Pal, {\em  The failure of rational dilation on the tetrablock},  J. Funct. Anal. 269 (2015), 1903–1924.
		
		\bibitem{Pal-Penta}	S. Pal and N. Tomar, {\em Operators associated with the pentablock and their relations with biball and symmetrized bidisc}, arXiv:2309.15080 [math.FA].
		
	
		
		\bibitem{Jaydeb} J. Sarkar, {\em Operator theory on symmetrized bidisc}, Indiana Univ. Math. J. 64 (2015), no. 3, 847–873.
		
		\bibitem{Sau} H. Sau, {\em  A note on tetrablock contractions}, New York J. Math. 21 (2015), 1347–1369.
		
		\bibitem{Su-Penta} G. Su, {\em Geometric properties of the pentablock}, Complex Anal. Oper. Theory 14 (2020), no. 4, Paper No. 44, 14 pp.
		
		\bibitem{von} J. von Neumann, {\em  Allgemeine Eigenwerttheorie Hermitescher Funktionaloperatoren}, Math. Ann. 102 (1930), 49–131.
		
		\bibitem{Vas} F. H. Vasilescu, {\em Analytic Functional Calculus and Spectral Decompositions}, Editura Academiei: Bucuresti, Romania and D. Reidel Publishing Company, 1982.
		
		
		
		\bibitem{PZ}  P. Zapalowski, {\em Geometric properties of domains related to $\mu$-synthesis}, J. Math. Anal. Appl. 430 (2015),  126–143. 
		
		
		
	\end{thebibliography}
\end{document}